 \theoremstyle{plain}
\newtheorem{thm}{Theorem}[section]
  \theoremstyle{definition}
  \newtheorem{defn}[thm]{Definition}
 \theoremstyle{definition}
  \newtheorem{example}[thm]{Example}
  \theoremstyle{plain}
  \newtheorem{lem}[thm]{Lemma}
  \theoremstyle{plain}
  \newtheorem{cor}[thm]{Corollary}
  \theoremstyle{plain}
  \newtheorem{prop}[thm]{Proposition}
  \theoremstyle{remark}
\theoremstyle{plain}
	\newtheorem{question}[thm]{Question}
\begin{document}

\title{Weakly Projective C*-Algebras}

\author{Terry A. Loring}

\address{Department of Mathematics and Statistics, University of New Mexico,
Albuquerque, NM 87131, USA.}

\keywords{$C*$-algebras, compactum, lifting, residually finite dimensional,
approximative absolute retract.}

\subjclass[2000]{46L85}


\markleft {Weakly Projective C*-Algebras (preprint version)}
\markright {Weakly Projective C*-Algebras (preprint version)}

\begin{abstract}
The noncommutative analog of an approximative absolute retract (AAR)
is introduced, a weakly projective $C^{*}$-algebra. This property
sits between being residually finite dimensional and projectivity.
Examples and closure properties are considered.
\end{abstract}

\maketitle

\section{Introduction }

The noncommutative analogs of absolute retracts and absolute neighborhood
retracts in the category of $C^{*}$-algebras are the 
projective (\cite{EffrosKaminkerShape})
and semiprojective (\cite{Blackadar-shape-theory}) $C^{*}$-algebras.
In applications, semiprojectivity is often not the most desirable
property; many authors have looked instead at weak semiprojectivity
(\cite{EilersLoringContingenciesStableRelations}). For example, see
\cite{DadarlatElliottFieldsOfKirchberg,LinWeakSemiprojectivity,
SpielbergWeakSemiprojectivity,HadwinLiApproxLift}.

Using what are called approximative retracts, Clapp, many years ago
in \cite{ClappAANR}, defined approximative absolute retracts (AAR)
and approximative absolute neighborhood retracts (AANR). The relation
between AANR spaces and weakly semiprojective $C^{*}$-algebras will be
explored elsewhere. Here, we get started on a noncommutative analog
of AAR, the weakly projective $C^{*}$-algebra.

The class of weakly projective $C^{*}$-algebras has some of the expected
closure properties. In addition, weak projectivity for $A$ is enough
to imply that $A$ is residually finite dimensional.

In \cite{chigogidzeNonComARs} it has been determined which compacta
$X$ have $C_{0}(X\setminus\{x_{0}\})$ projective---the dendrites.
It would be nice to know when $C_{0}(X\setminus\{x_{0}\})$ is semiprojective,
weakly projective or weakly semiprojective. 

The reader is warned that what is called weak
projectivity in \cite{PedersenExtensions} is weak semiprojectivity.

Many of the ideas here were inspired by ongoing collaborations with
S\o ren Eilers and Tatiana Shulman.

There are potentially more definitions and results related to absolute
neighborhood retracts than will be interesting
when adapted to $C^{*}$-algebras.  Some places these might be
found are \cite{VanMillInfDimTop} and the more
classic \cite{BorsukTheoryOfShape} and \cite{HuRetracts}.
For $C^{*}$-algebras recently found to be projective,
see \cite{chigogidzeNonComARs,LoringProjectiveKtheory,Shulman_nilpotents}.

\section{Approximative Absolute Retracts (AARs)}

In defining approximative absolute retracts we follow \cite{ClappAANR}.
Recall that a \emph{compactum} is a compact, metrizable space. 

\begin{defn}
A compactum $X$ is an \emph{approximative absolute retract (AAR)}
if, whenever $X$ is a closed subset of a compactum $Y,$ there is
a sequence $r_{n}$ of continuous functions $r_{n}:Y\rightarrow X$
so that
\[
\lim_{n\rightarrow\infty}r_{n}(x)=x
\]
uniformly over $x$ in $X.$
\end{defn}

We next use a pushout to get an approximate extension property. This
is a variation on an old trick. See \cite[Proposition 3.2]{HuRetracts}.

\begin{thm}
\label{thm:AARbyApproxExtension}
Let $X$ be a campactum. Then $X$
is an AAR if, and only if, whenever $Z$ is a closed subset of a compactum
$Y$ and $f:Z\rightarrow X$ is continuous, there is a sequence $g_{n}$
of continuous function $g_{n}:Y\rightarrow X$ for which 
$g_{n}(z)\rightarrow f(z)$
uniformly over $z$ in $Z.$ To summarize in a diagram: 
\[
\xymatrix{
 & Y \ar@{-->}[dl] _{ g_n }\\
 X & Z \ar[l] ^{f} \ar@{^{(}->}[u]
}
\]
\end{thm}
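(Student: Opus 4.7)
My plan is to prove both implications, with the pushout construction doing the real work in the harder direction.

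The \emph{if} direction is immediate: assume the approximate extension property holds. Given $X$ as a closed subset of a compactum $Y$, take $Z=X$ and $f=\mathrm{id}_X\colon X\to X$. The hypothesis produces $g_n\colon Y\to X$ with $g_n(x)\to x$ uniformly on $X$, which is exactly the AAR condition. So I would dispatch this in one short paragraph.

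For the \emph{only if} direction, the plan is to reduce an arbitrary extension problem to the AAR situation by means of the adjunction (pushout) space. Suppose $Z$ is closed in $Y$ and $f\colon Z\to X$ is continuous. Form the pushout
\[
W \;=\; (Y\sqcup X)/\!\sim,
\]
where $z\sim f(z)$ for each $z\in Z$. Let $q\colon Y\to W$ be the canonical map and let $\iota\colon X\to W$ be the canonical embedding; then $q|_Z = \iota\circ f$. Standard pushout arguments show $\iota$ is a closed embedding identifying $X$ with a closed subset of $W$. Applying the AAR property to the pair $X\subset W$ yields continuous $r_n\colon W\to X$ with $r_n(x)\to x$ uniformly on $X$. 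Setting $g_n := r_n\circ q\colon Y\to X$, for $z\in Z$ we have $g_n(z) = r_n(q(z)) = r_n(f(z))$, and since $f(z)\in X$ and the convergence of $r_n$ to the identity is uniform on $X$, we get $g_n(z)\to f(z)$ uniformly in $z\in Z$.

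The main obstacle to make the above rigorous is ensuring $W$ is a compactum, i.e. compact and metrizable. Compactness and Hausdorffness of the adjunction space follow from $Z$ being closed in $Y$ and $X,Y$ being compact Hausdorff, which I would cite or sketch briefly. Metrizability is the delicate point: it uses that $X$ and $Y$ are both metrizable and that $f$ is a continuous map with closed domain, so that $W$ is second countable and regular (hence metrizable by Urysohn), a classical fact about adjunction spaces of compact metric spaces that I would cite from a standard reference such as Hu's \emph{Theory of Retracts}. Once $W$ is known to be a compactum, the diagram chase above completes the proof, matching the triangle in the statement with $g_n = r_n\circ q$.
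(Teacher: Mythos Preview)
Your proposal is correct and follows essentially the same route as the paper: the easy direction via $Z=X$, $f=\mathrm{id}_X$, and the harder direction via the adjunction space $X\cup_Z Y$, applying the AAR property there and composing with the canonical map from $Y$. The paper simply asserts that the adjunction space is a compactum, whereas you spell out why metrizability holds; otherwise the arguments coincide.
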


\begin{proof}
Suppose $X$ is an AAR and we are given $Y,$ $Z,$ and $f$ as indicated.
Take the pushout, or adjunction space: 
\[
\xymatrix{
 X \cup_Z Y  & Y \ar[l] _(0.3){ \iota_2 }  \\
 X     \ar[u] ^ { \iota_1 }     
 		& Z  \ar[l] ^{ f } \ar@{^{(}->}[u] 
}  
\]
Notice that
$X\cup_{Z}Y$ is a compact metrizable space and that $\iota_{1}$
is an inclusion. We can apply the definition of AAR and find 
\[
\overline{r}_{n}:X\cup_{Z}Y\rightarrow X
\]
with $\overline{r}_{n}\circ\iota_{1}(w)\rightarrow w$ uniformly.
Therefore, when $z$ is in $Z,$
\[
\lim_{n}\overline{r}_{n}\circ\iota_{2}(z)
= \lim_{n}\overline{r}_{n}\circ\iota_{1}(f(z))
= f(z)
\]
uniformly, so we may set $g_{n}=\overline{r}_{n}\circ\iota_{2}.$

To prove the converse, assume the second condition holds and that
$X$ is a closed subset of a compactum $Y.$ We can find $g_{n}$
as in this diagram 
\[
\xymatrix{
 & Y \ar@{-->}[dl]_{g_n}\\
 X & X \ar[l] ^(0.4){\mathrm{id}_X } \ar@{^{(}->}[u] 
}
\]
with $g_{n}(x)\rightarrow\mbox{id}_{X}(x)$
uniformly for $x$ in $X.$ We set $r_{n}=g_{n}.$ 
\end{proof}

\begin{cor}
\label{cor:C*meaningOfAAR} 
Suppose $X$ is a compactum. Then $X$
is an AAR if, and only if, for every unital surjection 
$\pi:B\rightarrow C$
between separable, unital, commutative $C^{*}$-algebras, and for
every unital $*$-homomorphism $\varphi:C(X)\rightarrow C,$ there
is a sequence $\varphi_{n}:C(X)\rightarrow B$ of 
unital $*$-homomorphisms
so that $\pi\circ\varphi_{n}\rightarrow\varphi.$
\end{cor}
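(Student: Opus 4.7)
The plan is to invoke Gelfand duality to translate Corollary \ref{cor:C*meaningOfAAR} directly into Theorem \ref{thm:AARbyApproxExtension}. Every separable, unital, commutative $C^{*}$-algebra has the form $C(K)$ for a (unique up to homeomorphism) compactum $K$. Writing $B = C(Y)$ and $C = C(Z)$, the unital surjection $\pi : B \to C$ corresponds to a closed inclusion $Z \hookrightarrow Y$, with $\pi$ being restriction; the unital $*$-homomorphism $\varphi : C(X) \to C(Z)$ corresponds to a continuous $f : Z \to X$ via $\varphi(h) = h \circ f$; and each candidate lift $\varphi_{n} : C(X) \to C(Y)$ corresponds to a continuous $g_{n} : Y \to X$. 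Thus the diagram from Theorem \ref{thm:AARbyApproxExtension} is precisely the Gelfand dual of the diagram in the corollary.

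It then remains to match the convergence conditions, namely to show that $\pi \circ \varphi_{n} \to \varphi$ pointwise in norm if and only if $g_{n}|_{Z} \to f$ uniformly as maps $Z \to X$. One direction is immediate: if $g_{n}|_{Z} \to f$ uniformly, then for each $h \in C(X)$, the uniform continuity of $h$ on the compactum $X$ yields $\| h \circ g_{n}|_{Z} - h \circ f \|_{\infty} \to 0$, which is exactly $\pi(\varphi_{n}(h)) \to \varphi(h)$ in norm. For the converse I would embed $X$ into the Hilbert cube via continuous functions $h_{1}, h_{2}, \ldots \in C(X)$ so that $d(x, x') = \sum_{k} 2^{-k} |h_{k}(x) - h_{k}(x')|$ metrizes $X$. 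Given $\epsilon > 0$, pick $N$ with $\sum_{k > N} 2^{-k} < \epsilon / 2$, apply the pointwise-norm hypothesis to each $h_{1}, \ldots, h_{N}$, and combine to force $\sup_{z \in Z} d(g_{n}(z), f(z)) < \epsilon$ for all large $n$.

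With both translations in hand, the $C^{*}$-algebraic condition of the corollary is equivalent to the approximate-extension characterization of AARs in Theorem \ref{thm:AARbyApproxExtension}, and the corollary follows. The main obstacle, such as it is, is the convergence bookkeeping; everything else is a direct dualization. I would be careful to state explicitly that the intended meaning of $\pi \circ \varphi_{n} \to \varphi$ is pointwise-in-norm, since otherwise the equivalence with uniform convergence of the dual maps requires comment.
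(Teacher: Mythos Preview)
Your proposal is correct and follows the same approach as the paper: invoke Gelfand duality to translate the statement into Theorem~\ref{thm:AARbyApproxExtension}, with the only subtlety being the meaning of the convergence $\pi\circ\varphi_n\to\varphi$ (pointwise-in-norm). The paper's proof simply declares the translation ``straightforward'' and records that convergence convention, whereas you have carefully spelled out both the duality dictionary and the equivalence of the two convergence notions --- so your argument is a fleshed-out version of the paper's one-line proof.
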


\begin{proof}
This is straightforward, except perhaps the meaning of the convergence.
We require
\[
\lim_{n\rightarrow\infty}
\left\Vert \pi\circ\varphi_{n}(h)-\varphi(h)\right\Vert =0
\]
 for each $h$ in $C(X).$ 
\end{proof}

Of course, every AR is an AAR. To see examples of AARs that are not
AR, we can use the following, a rewording 
of \cite[Theorem 2.3]{ClappAANR}.

\begin{thm}
\label{thm:approxByARisAAR}
Suppose $X$ is a compactum and that $\theta_{n}:X\rightarrow X$
is a sequence of continuous functions that converges uniformly to
the identity. If each $\theta_{n}(X)$ is an AAR then $X$ is an AAR.
\end{thm}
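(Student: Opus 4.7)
The plan is to use the approximate extension form of AAR established in Theorem \ref{thm:AARbyApproxExtension}, rather than working directly with retractions. So given a closed embedding $Z \hookrightarrow Y$ of compacta and a continuous $f : Z \to X$, I would produce continuous maps $g_n : Y \to X$ with $g_n|_Z \to f$ uniformly; by Theorem \ref{thm:AARbyApproxExtension} this is what is needed to conclude $X$ is an AAR.

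The idea is to route each $f$ through the subspaces $\theta_n(X)$, each of which is, by hypothesis, an AAR (hence satisfies the approximate extension property by Theorem \ref{thm:AARbyApproxExtension}). For fixed $n$, consider the composition $\theta_n \circ f : Z \to \theta_n(X)$. Applying the approximate extension property of $\theta_n(X)$ with the closed inclusion $Z \hookrightarrow Y$ and the map $\theta_n \circ f$, I obtain a sequence $h_{n,k} : Y \to \theta_n(X) \subseteq X$ of continuous maps such that $h_{n,k}|_Z \to \theta_n \circ f$ uniformly on $Z$ as $k \to \infty$.

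Now I would diagonalize. Fix a metric $d$ on $X$. For each $n$, choose $k(n)$ large enough that
\[
\sup_{z \in Z} d\bigl(h_{n,k(n)}(z),\, \theta_n(f(z))\bigr) < \tfrac{1}{n},
\]
and set $g_n := h_{n,k(n)} : Y \to X$. By the triangle inequality, for $z \in Z$,
\[
d\bigl(g_n(z), f(z)\bigr) \;\le\; d\bigl(g_n(z), \theta_n(f(z))\bigr) + d\bigl(\theta_n(f(z)), f(z)\bigr) \;<\; \tfrac{1}{n} + \sup_{x \in X} d(\theta_n(x), x),
\]
and the right-hand side tends to $0$ uniformly in $z$ since $\theta_n \to \mathrm{id}_X$ uniformly on $X$. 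Thus $g_n|_Z \to f$ uniformly, and the approximate extension criterion is satisfied.

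The only subtle step is the diagonalization, and it is routine; the genuinely content-bearing move is recognizing that the hypothesis $\theta_n(X)$ is an AAR can be applied not to approximate $f$ directly, but to approximate the nearby composite $\theta_n \circ f$, after which uniform convergence $\theta_n \to \mathrm{id}_X$ absorbs the discrepancy. No obstruction beyond this is expected.
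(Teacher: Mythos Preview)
Your proof is correct and follows essentially the same approach as the paper's: both apply the approximate extension characterization (Theorem~\ref{thm:AARbyApproxExtension}) to the AAR subspaces $\theta_n(X)$, approximately extend a map landing in $\theta_n(X)$, and then use uniform convergence $\theta_n\to\mathrm{id}_X$ together with the triangle inequality to close the gap. The only cosmetic difference is that the paper verifies the original retraction definition for $X$ (the special case $Z=X$, $f=\mathrm{id}_X$ of what you do), passing to a subsequence so that $d(\theta_n(x),x)\le 1/n$, whereas you verify the extension form directly for arbitrary $Z\hookrightarrow Y$ and $f$.
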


\begin{proof}
Let $d$ be a compatible metric on $X.$ Passing to a subsequence
we may assume 
\[
d(\theta_{n}(x),x)\leq\frac{1}{n}
\]
for all $n$ and all $x.$ Suppose $X$ is a closed subset of a compactum
$Y.$ We apply 
Theorem~\ref{thm:AARbyApproxExtension} to $\theta(X)$
to find continuous $r_{n}$ as in this diagram, 
\[
\xymatrix{
	&	& Y  \ar[dl] _{r_n}
\\
X 
	& \theta_n (X) \ar@{_{(}->}[l]
		& X \ar@{^{(}->}[u] \ar[l] ^(0.4){\theta_n}
}
\]
with
\[
d(r_{n}(x),x)\leq\frac{1}{n}
\]
for all $x$ in $X.$ Therefore
\[
d(r_{n}(x),x)
\leq d(r_{n}(x),\theta_{n}(x))+d(\theta_{n}(x),x)
\leq\frac{2}{n}
\]
for all $x$ in $X.$
\end{proof}

\begin{example}
\label{example:topSinceCurve}
(\cite[Example 2.2]{ClappAANR})
For an AAR that is not an AR, we have the topologist's sine curve
\[
\xygraph{
!~-{@{-}@[|(3.0)]}
!{<0cm,0cm>;<3.0cm,0.0cm>:<0.0cm,3.0cm>::}  
!{(0.73,0.8)}*{X}="X"
!{(0,1)}*{}="01"
!{(0,0)}*{}="00"
!{(1,1)}*{}="a1"
!{(0.75,0)}*{}="b1"
!{(0.5,1)}*{}="a2"
!{(0.375,0)}*{}="b2"
!{(0.25,1)}*{}="a3"
!{(0.1875,0)}*{}="b3"
!{(0.125,1)}*{}="a4"
!{(0.09375,0)}*{}="b4"
!{(0.0625,1)}*{}="a5"
!{(0.046875,0)}*{}="b5"
!{(0.03125,1)}*{}="a6"
!{(0.0234375,0)}*{}="b6"
!{(0.015625,1)}*{}="a7"
!{(0.01171875,0)}*{}="b7"
!{(0.0078125,1)}*{}="a8"
!{(0.005859375,0)}*{}="b8"
!{(0.00390625,1)}*{}="a9"
!{(0.0029296875,0)}*{}="b9"
!{(0.001953125,1)}*{}="a10"
!{(0.0014648438,0)}*{}="b10"
"00"-"01"
"a1"-"b1"
"b1"-"a2"
"a2"-"b2"
"b2"-"a3"
"a3"-"b3"
"b3"-"a4"
"a4"-"b4"
"b4"-"a5"
"a5"-"b5"
"b5"-"a6"
"a6"-"b6"
"b6"-"a7"
"a7"-"b7"
"b7"-"a8"
"a8"-"b8"
"b8"-"a9"
"a9"-"b9"
"b9"-"a10"
"a10"-"b10"
}
\]
There is an increasing sequence of closed subsets $X_{n}$ with dense
union where each $X_{n}$ is homeomorphic to a closed interval. 
\[
\xygraph{
!~-{@{-}@[|(3.0)]}
!{<0cm,0cm>;<3.0cm,0.0cm>:<0.0cm,3.0cm>::}  
!{(0.73,0.8)}*{X_1}="X"
!{(0,1)}*{}="01"
!{(0,0)}*{}="00"
!{(1,1)}*{}="a1"
!{(0.75,0)}*{}="b1"
!{(0.5,1)}*{}="a2"
"a1"-"b1"
"b1"-"a2"
}
\quad
\xygraph{
!~-{@{-}@[|(3.0)]}
!{<0cm,0cm>;<3.0cm,0.0cm>:<0.0cm,3.0cm>::}  
!{(0.73,0.8)}*{X_2}="X"
!{(0,1)}*{}="01"
!{(0,0)}*{}="00"
!{(1,1)}*{}="a1"
!{(0.75,0)}*{}="b1"
!{(0.5,1)}*{}="a2"
!{(0.375,0)}*{}="b2"
!{(0.25,1)}*{}="a3"
"a1"-"b1"
"b1"-"a2"
"a2"-"b2"
"b2"-"a3"
}
\quad
\xygraph{
!~-{@{-}@[|(3.0)]}
!{<0cm,0cm>;<3.0cm,0.0cm>:<0.0cm,3.0cm>::}  
!{(0.73,0.8)}*{X_3}="X"
!{(0,1)}*{}="01"
!{(0,0)}*{}="00"
!{(1,1)}*{}="a1"
!{(0.75,0)}*{}="b1"
!{(0.5,1)}*{}="a2"
!{(0.375,0)}*{}="b2"
!{(0.25,1)}*{}="a3"
!{(0.1875,0)}*{}="b3"
!{(0.125,1)}*{}="a4"
"a1"-"b1"
"b1"-"a2"
"a2"-"b2"
"b2"-"a3"
"a3"-"b3"
"b3"-"a4"
}
\]
The map $r_{n}:X\rightarrow X_{n}$ that sends $X\setminus X_{n}$
horizontally to the left-most ascending segment in $X_{n}$, while
fixing $X_{n},$ gives us 
\[
d(r_{n}(x),x)\leq2^{-n+1}
\]
and so $X$ is an AAR. On the other hand, $X$ is not path connected
and so not an AR.
\end{example}

\section{Pointed Approximative Absolute Retracts}

From the point of view of $C^{*}$-algebras, we need not only $C(X)$
for $X$ a compactum, but most importantly also the ideals $C_{0}(U)$
for open subsets $U.$ We could consider locally compact spaces, but
instead opt to look at pointed compacta. In terms of $C^{*}$-algebras,
a pointed space translates to the surjection $\delta_{\infty}$ in
the exact sequence 
\[
\xymatrix{
0 \ar[r]
	& C_{0}\left(X\right) \ar[r]
		& C\left(X^{+}\right) \ar[r]^(0.65){\delta_{\infty}}
			& \mathbb{C} \ar[r]
				& 0 .
}
\]
In the noncommutative case we
will of course look at $\lambda$ in the exact sequence
\[
\xymatrix{
0 \ar[r]
	& A \ar[r]
		& {\widetilde {A}} \ar[r]^{\lambda}
			& \mathbb{C} \ar[r]
				& 0 .
}
\]
We use $\widetilde{A}$ to mean ``add a unit, no matter what.'' For
a locally compact space $X,$ we use $X^{+}$ to denote the one-point
compactification. If $X$ is compact, then $X^{+}$ has an extra,
isolated point.

Certainly the concepts of AAR and AANR have been explored in the locally
compact setting, as for example in \cite{PowersM_AANRs}. It is basically
a matter of convenience to look instead at pointed compact spaces.
This was the approach taken by Blackadar looking at projectivity and
semiprojectivity in \cite{Blackadar-shape-theory}.

\begin{defn}
\label{def:pointed-AAR} 
A pointed compactum $(X,x_{0})$ is a 
\emph{pointed approximative absolute retract} if,
whenever $X$ is a closed subset
of a compactum $Y,$ there is a sequence $r_{n}$ of
continuous functions
$r_{n}:Y\rightarrow X$ so that
\[
r_{n}(x_{0})=x_{0}
\]
for all $n$ and
\[
\lim_{n\rightarrow\infty}r_{n}(x)=x
\]
uniformly over $x$ in $X.$
\end{defn}

\begin{lem}
Suppose $x_{0}$ is any point in a compactum $X_{0}.$ If $(X,x_{0})$
is a pointed approximative absolute retract then $X$ is an approximative
absolute retract.
\end{lem}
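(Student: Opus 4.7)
The plan is to observe that the pointed AAR hypothesis is simply the AAR hypothesis plus one extra condition ($r_n(x_0)=x_0$), so the conclusion follows by discarding that extra condition. More concretely, suppose $X$ is a closed subset of a compactum $Y$. Since $(X,x_0)$ is a pointed compactum we have $x_0\in X\subseteq Y$, so the pointed AAR definition applies verbatim to this embedding and yields a sequence of continuous maps $r_n:Y\to X$ with $r_n(x_0)=x_0$ for every $n$ and $r_n(x)\to x$ uniformly over $x\in X$. The second of these properties is exactly what is required to witness that $X$ is an AAR in the sense of the unpointed definition, so the same sequence $r_n$ works.

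There is essentially no obstacle in this argument; the only thing to check is that the hypothesis is being applied to a legitimate embedding, which it is because any containment $X\subseteq Y$ of compacta is allowed in both definitions and does not interact with the distinguished point. I would therefore write the proof as a single short paragraph that fixes $X\subseteq Y$, invokes Definition~\ref{def:pointed-AAR} to produce $r_n$, and then simply remarks that forgetting the basepoint condition leaves precisely the AAR requirement.
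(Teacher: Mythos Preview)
Your argument is correct and is exactly the paper's approach: the paper's entire proof is the two-word sentence ``Ignore $x_{0}$.'' Your version merely spells out what that means, so there is nothing to add.
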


\begin{proof}
Ignore $x_{0}.$
\end{proof}

\begin{example}
\label{exa:topSineCurveBadPoint} 
If $X$ is the topologist's sine
curve, and if $x_{1}$ is the point on the bottom-left of $X$ as
drawn in Example~\ref{example:topSinceCurve}, then $(X,x_{1})$
is not a pointed AAR. 
\end{example}

\begin{proof}
By definition $X$ sits as a closed subset of the unit square $S.$
For $(X,x_{1})$ to be an AAR, we would need $r_{n}:S\rightarrow X$
that fix $x_{1}$ and that come close to fixing elements of $X.$
The points in $X$ off the left edge are not path connected in $X$
to $x_{1}$ and the continuity of $r_{n}$ forces $r_{n}(S)$ to
be a subset of that left edge. This is a contradiction. 
\end{proof}

\begin{thm}
\label{thm:AARbyApproxExtension-pointed}Let $X$ be a campactum and
$x_{0}$ a point in $X.$ Then $(X,x_{0})$ is a pointed AAR if and
only if, whenever $Z$ is a closed subset of a compactum $Y,$ and
$z_{0}$ is a point in $Z$ and $f:Z\rightarrow X$ is continuous
with $f(z_{0})=x_{0},$ there is a sequence $g_{n}$ of continuous
functions $g_{n}:Y\rightarrow X$ for which $g_{n}(z_{0})=x_{0}$
for all $n$ and $g_{n}(z)\rightarrow z$ uniformly for $z$ in $Z.$
\end{thm}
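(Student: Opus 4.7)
The plan is to imitate the proof of Theorem~\ref{thm:AARbyApproxExtension}, using the pushout construction, with the single additional observation that the basepoint descends well to the pushout because $f(z_0)=x_0$.

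For the forward direction, suppose $(X,x_0)$ is a pointed AAR and we are given $Y$, $Z\subseteq Y$ closed, $z_0\in Z$, and $f\colon Z\to X$ continuous with $f(z_0)=x_0$. Form the adjunction space $X\cup_Z Y$ with its canonical maps $\iota_1\colon X\to X\cup_Z Y$ and $\iota_2\colon Y\to X\cup_Z Y$; as noted in the proof of Theorem~\ref{thm:AARbyApproxExtension}, this is a compactum and $\iota_1$ is an inclusion. The key point to record is that, because $\iota_1\circ f=\iota_2|_Z$, the two candidate basepoints get identified:
\[
\iota_1(x_0)=\iota_1(f(z_0))=\iota_2(z_0).
\]
Call this common point $w_0$. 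Applying the pointed AAR property of $(X,x_0)$ to the inclusion $\iota_1\colon X\hookrightarrow X\cup_Z Y$ with basepoint $w_0$ produces continuous $\overline{r}_n\colon X\cup_Z Y\to X$ with $\overline{r}_n(w_0)=x_0$ for all $n$ and $\overline{r}_n\circ\iota_1(w)\to w$ uniformly for $w\in X$. Set $g_n=\overline{r}_n\circ\iota_2\colon Y\to X$. Then $g_n(z_0)=\overline{r}_n(\iota_2(z_0))=\overline{r}_n(w_0)=x_0$, and for $z\in Z$,
\[
g_n(z)=\overline{r}_n\circ\iota_2(z)=\overline{r}_n\circ\iota_1(f(z))\longrightarrow f(z)
\]
uniformly in $z$, exactly as in the unpointed proof.

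For the converse, assume the approximate extension property in the statement and suppose $X$ is a closed subset of a compactum $Y$. Apply the hypothesis to $Z=X$, $z_0=x_0$, and $f=\mathrm{id}_X$ (which certainly sends $x_0$ to $x_0$). The resulting sequence $g_n\colon Y\to X$ satisfies $g_n(x_0)=x_0$ and $g_n(x)\to x$ uniformly for $x\in X$, so the $r_n:=g_n$ witness that $(X,x_0)$ is a pointed AAR.

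No step here is a serious obstacle; the only thing that could go wrong is the basepoint bookkeeping on the pushout, and that is handled by the single identity $\iota_1(x_0)=\iota_2(z_0)$ that follows directly from $f(z_0)=x_0$. Everything else is a direct transcription of the proof of Theorem~\ref{thm:AARbyApproxExtension}.
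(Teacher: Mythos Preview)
Your proof is correct and follows essentially the same approach as the paper: both modify the adjunction-space argument of Theorem~\ref{thm:AARbyApproxExtension} by observing that $\iota_1(x_0)=\iota_2(z_0)$, so the basepoint-preserving approximate retractions $\overline{r}_n$ supplied by the pointed AAR hypothesis yield $g_n=\overline{r}_n\circ\iota_2$ with $g_n(z_0)=x_0$. You are simply more explicit than the paper's terse sketch, and you also spell out the (equally routine) converse direction.
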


\begin{proof}
The proof of Theorem~\ref{thm:AARbyApproxExtension} can be modified
as follows. In the adjunction space,
\[
\iota_{2}(z_{0})=\iota_{1}(f(z_{0}))=\iota_{1}(x_{0}).
\]
The $\overline{r}_{n}$ can now be found with the additional property
$\overline{r}_{n}(\iota_{1}(x_{0}))=x_{0}$ and so we find
\[
g_{n}(z_{0})
=\overline{r}_{n}(\iota_{2}(z_{0}))
=\overline{r}_{n}(\iota_{1}(x_{0}))
=x_{0}.
\]
\end{proof}

\begin{cor}
\label{cor:C*meaningOfAAR-pointed}
Suppose $X$ is a compactum and
$x_{0}$ is in $X.$ Then $(X,x_{0})$ is a pointed AAR if, and only
if, for every unital surjection $\pi:B\rightarrow C$ between separable,
commutative $C^{*}$-algebras, and for every $*$-homomorphism 
\[
\varphi:C_{0}(X\setminus\{x_{0}\})\rightarrow C,
\]
there is a sequence 
 \[
\varphi_{n}:C_{0}(X\setminus\{x_{0}\})\rightarrow B
\]
 of $*$-homomorphisms so that $\pi\circ\varphi_{n}\rightarrow\varphi.$
\end{cor}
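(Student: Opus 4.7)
My plan is to mimic the proof of Corollary \ref{cor:C*meaningOfAAR}, invoking Theorem \ref{thm:AARbyApproxExtension-pointed} in place of Theorem \ref{thm:AARbyApproxExtension}, and carefully tracking the base point across a non-unital Gelfand dictionary.

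First I would set up the Gelfand translation. Write the separable commutative $C^*$-algebras as $B = C_0(Y)$ and $C = C_0(W)$ for locally compact Hausdorff $Y, W$, so that $\pi : B \to C$ corresponds to a closed inclusion $W \hookrightarrow Y$; reading ``unital surjection'' as saying the unitized map $\widetilde \pi : \widetilde B \to \widetilde C$ is unital yields a closed inclusion $W^+ \hookrightarrow Y^+$ of the one-point compactifications preserving the common distinguished point $z_0 := \infty$. A $*$-homomorphism $\varphi : C_0(X \setminus \{x_0\}) \to C$ unitizes to a unital $*$-homomorphism $C(X) \to C(W^+)$, which by Gelfand duality is a continuous map $f : W^+ \to X$; the condition that $\varphi$ lands in the ideal $C_0(W) = \ker(\mathrm{ev}_{\infty})$ is exactly $f(z_0) = x_0$. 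Similarly each $\varphi_n$ corresponds to $g_n : Y^+ \to X$ with $g_n(z_0) = x_0$, and $\pi \circ \varphi_n$ to $g_n|_{W^+}$.

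Next I would verify that the convergence $\lim_n \|\pi \circ \varphi_n(h) - \varphi(h)\| = 0$ for every $h \in C_0(X \setminus \{x_0\})$ is equivalent to uniform convergence $g_n|_{W^+} \to f$ as maps into $X$. The forward direction is immediate. For the reverse, assuming uniform convergence fails, compactness yields a subsequence along which $g_n(z_n) \to y_1$, $f(z_n) \to y_2$, with $y_1 \neq y_2$; since $C_0(X \setminus \{x_0\})$ separates every pair of points of $X$ (the vanishing at $x_0$ lets it separate $x_0$ from any other point), one finds $h$ with $h(y_1) \neq h(y_2)$, a contradiction.

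With these identifications, the $C^*$-theoretic condition in the corollary becomes exactly the pointed approximate extension condition: given the closed inclusion $(W^+, z_0) \hookrightarrow (Y^+, z_0)$ and any continuous $f : W^+ \to X$ with $f(z_0) = x_0$, produce continuous $g_n : Y^+ \to X$ with $g_n(z_0) = x_0$ and $g_n|_{W^+} \to f$ uniformly. Theorem \ref{thm:AARbyApproxExtension-pointed} says this is equivalent to $(X, x_0)$ being a pointed AAR, which gives both directions. The main delicate point is the base-point bookkeeping across the non-unital dictionary; once the distinguished $\infty$'s on each side are correctly identified with $z_0$, the rest is routine translation as in Corollary \ref{cor:C*meaningOfAAR}.
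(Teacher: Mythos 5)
Your proposal is correct and follows essentially the same route as the paper: translating via the pointed Gelfand dictionary (surjections of $C_{0}$-algebras correspond to closed inclusions of one-point compactifications preserving $\infty$, and pointwise-norm convergence of $*$-homomorphisms corresponds to uniform convergence of the pointed maps) and then invoking Theorem~\ref{thm:AARbyApproxExtension-pointed}. Your write-up is in fact more detailed than the paper's, which states the dictionary and says ``the result follows.''
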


\begin{proof}
For locally compact spaces $\Lambda$ and $\Omega,$ the pointed continuous
maps from $(\Omega^{+},\infty)$ to $(\Lambda^{+},\infty)$ are in
one-to-one correspondence with the $*$-homorphisms from $C_{0}(\Lambda)$
to $C_{0}(\Omega).$ The $*$-homomorphism $h\mapsto h\circ f$ will
be a surjection if and only if $f:\Omega^{+}\rightarrow\Lambda^{+}$
is injective. Convergence in 
$\hom\left(C_{0}(\Lambda),C_{0}(\Omega)\right)$
corresponds to uniform convergence of functions that preserve the
points at infinity. The result follows.
\end{proof}

\begin{thm}
\label{thm:approxByARisAAR-pointed} 
Suppose $X$ is a compactum,
that $\theta_{n}:X\rightarrow X$ is a sequence of continuous functions
that converges uniformly to the identity and that $x_{0}$ is a point
in $X$ that is fixed by all the $\theta_{n}.$ 
If each $\left(\theta_{n}(X),x_{0}\right)$
is a pointed AAR then $(X,x_{0})$ is a pointed AAR.
\end{thm}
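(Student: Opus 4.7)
The plan is to mimic the proof of Theorem~\ref{thm:approxByARisAAR} and simply add the basepoint bookkeeping, using Theorem~\ref{thm:AARbyApproxExtension-pointed} in place of Theorem~\ref{thm:AARbyApproxExtension}. Note first that $x_0 = \theta_n(x_0) \in \theta_n(X)$ for every $n$, so the hypothesis that $(\theta_n(X), x_0)$ is a pointed AAR is even sensible.

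Fix a compatible metric $d$ on $X$. After passing to a subsequence, I may assume $d(\theta_n(x), x) \le 1/n$ for every $x \in X$ and every $n$. Suppose $X$ sits as a closed subset of a compactum $Y$; I want continuous $r_n : Y \to X$ with $r_n(x_0) = x_0$ and $r_n(x) \to x$ uniformly for $x \in X$. I apply Theorem~\ref{thm:AARbyApproxExtension-pointed} to the pointed AAR $(\theta_n(X), x_0)$, with $Z = X$ inside $Y$, distinguished point $z_0 = x_0 \in X$, and continuous $f = \theta_n : X \to \theta_n(X)$. The condition $f(z_0) = x_0$ holds because $\theta_n(x_0) = x_0$. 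The conclusion yields, for each fixed $n$, a sequence $g_{n,m} : Y \to \theta_n(X) \subset X$ of continuous maps with $g_{n,m}(x_0) = x_0$ for all $m$ and $g_{n,m}(x) \to \theta_n(x)$ uniformly for $x \in X$ as $m \to \infty$.

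Now I perform a diagonal selection: for each $n$, choose $m_n$ large enough that
\[
\sup_{x \in X} d\bigl(g_{n,m_n}(x),\, \theta_n(x)\bigr) \le \frac{1}{n},
\]
and set $r_n = g_{n,m_n} : Y \to X$. Then $r_n(x_0) = x_0$, and for every $x \in X$,
\[
d(r_n(x), x) \le d\bigl(r_n(x), \theta_n(x)\bigr) + d\bigl(\theta_n(x), x\bigr) \le \frac{2}{n},
\]
which gives uniform convergence $r_n(x) \to x$ on $X$. Hence $(X, x_0)$ is a pointed AAR.

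There is no real obstacle here: the only thing to check beyond the unpointed argument is that the map $f = \theta_n$ being used inside Theorem~\ref{thm:AARbyApproxExtension-pointed} carries the basepoint to the basepoint, which is exactly the extra hypothesis on $\theta_n$. The triangle inequality step is unchanged from the proof of Theorem~\ref{thm:approxByARisAAR}, and the basepoint condition is carried along automatically by the pointed approximate extension property.
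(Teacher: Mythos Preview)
Your proof is correct and follows exactly the approach the paper indicates: redo the proof of Theorem~\ref{thm:approxByARisAAR} with basepoints, using Theorem~\ref{thm:AARbyApproxExtension-pointed} in place of Theorem~\ref{thm:AARbyApproxExtension}. Your version is in fact more explicit than the paper's, since you spell out the diagonal selection that produces a single $r_n$ from the sequence furnished by the pointed approximate extension property.
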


\begin{proof}
Just observe that in the proof of Theorem~\ref{thm:approxByARisAAR},
the $r_{n}$ can now be found fixing $x_{0}.$
\end{proof}

\begin{example}
\label{exa:topSineCurvegoodPoint} 
If $X$ is the topologist's sine
curve, and if $x_{0}$ is the point on the top-right of $X$ as drawn
in Example~\ref{example:topSinceCurve}, then $(X,x_{1})$ is a pointed
AAR. 
\end{example}

\section{A Noncommutative Analog of AAR}

From Corollary~\ref{cor:C*meaningOfAAR-pointed} we see how to define
weak projectivity. In light of Examples~\ref{exa:topSineCurveBadPoint}
and \ref{exa:topSineCurvegoodPoint} we will need to take care when
dealing with unital $C^{*}$-algebras. We will, in fact, never define
a notion of ``weakly projective in the unital category'' but will
define, for not-necessarily-unital $C^{*}$-algebras, the notion of
``weakly projective relative to unital $C^{*}$-algebras.'' This rather
ruins the analogy with the topology, but is more in keeping with how
$C^{*}$-algebraists work. More than zero of us avoid the unital category
for the simple reason that it does not allow for ideals.

\begin{defn}
Suppose $A$ is a separable $C^{*}$-algebra. We say $A$ is \emph{weakly
projective} if, for every $*$-homomorphism $\varphi:A\rightarrow C$
and every surjection $\rho:B\rightarrow C$ of arbitrary $C^{*}$-algebras,
there is a sequence $\varphi_{n}:A\rightarrow B$ of $*$-homomorphisms
so that $\rho\circ\varphi_{n}\rightarrow\varphi.$
\end{defn}

By restricting what surjections $\rho$ is allowed to be, we get weaker
properties.

\begin{defn}
Suppose $A$ is a separable $C^{*}$-algebra. We say $A$ is 
\emph{weakly projective with respect to unital $C^{*}$-algebras} if,
for every
$*$-homomorphism $\varphi:A\rightarrow C$ and every unital surjection
$\rho:B\rightarrow C$ between unital $C^{*}$-algebras, there is
a sequence $\varphi_{n}:A\rightarrow B$ of $*$-homomorphisms so
that $\rho\circ\varphi_{n}\rightarrow\varphi.$
\end{defn}

Obviously projective implies weakly projective and weakly projective
implies weakly projective w.r.t.~unital $C^{*}$-algebras.

\begin{lem}
If $A$ is weakly projective w.r.t.\ unital $C^{*}$-algebras then
$A$ does not have a unit. 
\end{lem}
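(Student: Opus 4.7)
The plan is to assume $A$ is unital with unit $1_A$ (and nonzero) and to manufacture an explicit unital surjection between unital $C^*$-algebras for which no approximate lift of a well-chosen $\varphi$ can exist. I would take $B = C([0,1], A)$ and $C = A \oplus A$, with the unital surjection
\[
\rho : B \to C, \quad \rho(f) = (f(0), f(1)).
\]
Both $B$ and $C$ are unital, and $\rho$ is a unital $*$-epimorphism. The homomorphism to obstruct is the (non-unital) $*$-homomorphism $\varphi : A \to A \oplus A$ defined by $\varphi(a) = (a, 0)$.

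For the contradiction, I would suppose that $\varphi_n : A \to B$ are $*$-homomorphisms with $\rho \circ \varphi_n \to \varphi$. Setting $p_n := \varphi_n(1_A)$, each $p_n$ is a projection in $B$, i.e., a continuous path $t \mapsto p_n(t)$ of projections in $A$. The convergence then yields $p_n(0) \to 1_A$ and $p_n(1) \to 0$ in norm.

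The crux of the argument is the elementary observation that any projection $q$ in a $C^*$-algebra satisfies $\|q\| \in \{0, 1\}$ (since $\|q\|^2 = \|q^2\| = \|q\|$). Hence, for large $n$, $\|p_n(1)\| < 1$ forces $p_n(1) = 0$, and applied to the projection $1_A - p_n(0)$, $\|1_A - p_n(0)\| < 1$ forces $p_n(0) = 1_A$. But $t \mapsto \|p_n(t)\|$ is a continuous $\{0,1\}$-valued function on $[0,1]$, hence constant; this contradicts $\|p_n(0)\| = 1 \neq 0 = \|p_n(1)\|$.

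I expect the main obstacle to be finding a counterexample that works uniformly across every nonzero unital $A$ without presupposing characters, traces, or other auxiliary structure on $A$. Once one hits on the path-algebra $C([0,1],A)$ together with the pair-of-endpoints quotient, the rest is just the norm rigidity of projections combined with connectedness of $[0,1]$, so no further technology is needed.
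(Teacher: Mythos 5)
Your proof is correct and is essentially the paper's own argument: both use the unital surjection $C([0,1],A)\to A\oplus A$ by evaluation at the endpoints and derive a contradiction from the fact that the image of $1_A$ is a norm-continuous path of projections joining (something forced to be) $0$ to (something forced to be) $1_A$. The only cosmetic difference is that the paper phrases the rigidity step via homotopy of projections, while you use the equally valid and slightly more elementary observation that $\|q\|\in\{0,1\}$ for any projection $q$ together with connectedness of $[0,1]$.
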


\begin{proof}
Suppose $A$ is unital. Consider the interval over $A,$
\[
\mathbf{I}A=C\left([0,1],A\right),
\]
and the surjection found by evaluation at both endpoints,
\[
\delta_{0}\oplus\delta_{1}:\mathbf{I}A\rightarrow A\oplus A.
\]
The $*$-homomorphism $\iota_{1}:A\rightarrow A\oplus A$ defined
by $\iota_{1}(a)=(0,a)$ should lift approximately to 
$\psi_{n}:A\rightarrow\mathbf{I}A.$
At $0,$ $\psi_{n}(1)$ will be a projection near $0,$ and so indeed
$\psi_{n}(1)(0)=0$ for large $n.$ The only thing homotopic to $0$
in the space of projections in $A$ is $0$ itself, so we conclude
$\psi_{n}(1)=0$ for large $n.$ Therefore
\[
\left(\left(\delta_{0}\oplus\delta_{1}\right)\circ\psi_{n}\right)(1)
=(0,0)
\]
will not converge to $\iota_{1}(1)=(0,1).$ 
\end{proof}

\begin{thm}
\label{thm:checkWPonlySeparableCase}
If $A$ is a separable $C^{*}$-algebra
then the following are equivalent:
\begin{enumerate}
\item $A$ is weakly projective;
\item for all separable $C^{*}$-algebras $B$ 
and $C,$ and for every $*$-homomorphism
$\varphi:A\rightarrow C$ and every surjection $\rho:B\rightarrow C,$
there is a sequence $\varphi_{n}:A\rightarrow B$ of $*$-homomorphisms
so that $\rho\circ\varphi_{n}\rightarrow\varphi.$
\end{enumerate}
\end{thm}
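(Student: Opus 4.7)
The implication $(1) \Rightarrow (2)$ is immediate, since separable $C^{*}$-algebras are a special case of arbitrary ones; all of the work lies in the converse.

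For $(2) \Rightarrow (1)$, suppose we are given $\varphi : A \to C$ and a surjection $\rho : B \to C$, where $A$ is separable but $B$ and $C$ are of arbitrary cardinality. The plan is to restrict attention to separable subalgebras of $B$ and $C$ to which hypothesis (2) directly applies. Let $C_{0} = \overline{\varphi(A)}$, which is separable because $A$ is.

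The one genuinely content-bearing step is to produce a separable $C^{*}$-subalgebra $B_{0} \subseteq B$ for which $\rho|_{B_{0}} : B_{0} \to C_{0}$ is still surjective. Fix a countable dense subset $\{c_{k}\}_{k \ge 1}$ of $C_{0}$ and, using surjectivity of $\rho$, pick $b_{k} \in B$ with $\rho(b_{k}) = c_{k}$. Let $B_{0}$ be the (separable) $C^{*}$-subalgebra of $B$ generated by $\{b_{k}\}$. Then $\rho(B_{0})$ is a $C^{*}$-subalgebra of $C$ because the image of a $*$-homomorphism between $C^{*}$-algebras is automatically closed; it is contained in $C_{0}$ since $C_{0}$ is a $C^{*}$-subalgebra containing each $c_{k}$; and it contains $\{c_{k}\}$, whose closure is $C_{0}$. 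Hence $\rho(B_{0}) = C_{0}$.

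Now regard $\varphi$ as a $*$-homomorphism $A \to C_{0}$ and apply hypothesis (2) to the surjection $\rho|_{B_{0}} : B_{0} \to C_{0}$ between separable $C^{*}$-algebras; this produces $*$-homomorphisms $\varphi_{n} : A \to B_{0}$ with $\rho|_{B_{0}} \circ \varphi_{n}(a) \to \varphi(a)$ in norm for every $a \in A$. Composing with the inclusion $B_{0} \hookrightarrow B$ yields the required sequence $\varphi_{n} : A \to B$ with $\rho \circ \varphi_{n} \to \varphi$, establishing (1). No step presents a real obstacle; the only point where care is needed is the automatic closedness of the image of a $*$-homomorphism, which is what lets us conclude $\rho(B_{0}) = C_{0}$ rather than merely $\rho(B_{0}) \supseteq \{c_{k}\}$.
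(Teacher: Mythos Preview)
Your proof is correct and follows essentially the same route as the paper: pass to a separable $C^{*}$-subalgebra $B_{0}\subseteq B$ generated by countably many preimages of a dense set in $\varphi(A)$, observe that $\rho(B_{0})$ (equivalently $B_{0}/(B_{0}\cap\ker\rho)$ in the paper's phrasing) equals the separable target $C_{0}=\varphi(A)$, and then apply (2). The only cosmetic difference is that the paper writes the separable quotient as $B_{0}/I_{0}$ rather than directly as the image $\rho(B_{0})$; your remark that the closure in $C_{0}=\overline{\varphi(A)}$ is redundant (since $*$-homomorphic images are closed) is also correct.
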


\begin{proof}
Certainly (a) implies (b). For the reverse, suppose (b) holds and
$\varphi:A\rightarrow B/I$ is given. Let $a_{1},a_{2},\ldots$ be
dense in $A.$ Pick any $b_{1},b_{2},\ldots$ so that $\pi(b_{k})=\varphi(a_{k})$
and let $B_{0}$ denote the $C^{*}$-algebra generated by the $b_{k}.$
This is separable. Let $I_{0}=B\cap I.$ If we let $\varphi_{0}$
denote $\varphi$ but with codomain $B_{0}/I_{0},$ we have the commutative
diagram 
\[
\xymatrix{
 	& B_0   \ar[d] ^{\pi_0}  \ar[r] ^{\iota} 
  		& B \ar[d] ^{\pi}\\
A \ar[r] ^(0.4){\varphi_0} \ar@/_0.3cm/[rr] _{ \varphi}
	& B_0/I_0 \ar[r] ^{\iota} 
		& B/I
}
\]
We know there are $\varphi_{n}:A\rightarrow B_{0}$
with $\pi_{0}\circ\varphi_{n}(a)\rightarrow\varphi_{0}(a),$ and so
$\iota\circ\varphi_{n}$ are the desired approximate lifts.
\end{proof}

\begin{cor}
Suppose $X$ is a locally compact, metrizable space. If $C_{0}(X)$
is weakly projective then $\left(X^{+},\infty\right)$ is a pointed
AAR .\end{cor}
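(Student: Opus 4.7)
The plan is to invoke the $C^{*}$-algebraic characterization of pointed AARs provided by Corollary~\ref{cor:C*meaningOfAAR-pointed} and observe that weak projectivity supplies exactly what is needed (and more).

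First I identify $X^{+}\setminus\{\infty\}$ with $X$ so that $C_{0}(X^{+}\setminus\{\infty\})=C_{0}(X)$. By Corollary~\ref{cor:C*meaningOfAAR-pointed}, showing that $(X^{+},\infty)$ is a pointed AAR amounts to showing that for every unital surjection $\pi\colon B\to C$ between separable, commutative, unital $C^{*}$-algebras and every $*$-homomorphism $\varphi\colon C_{0}(X)\to C$, there is a sequence $\varphi_{n}\colon C_{0}(X)\to B$ of $*$-homomorphisms with $\pi\circ\varphi_{n}\to\varphi$.

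This is now an immediate specialization of the hypothesis. Weak projectivity of $C_{0}(X)$ asserts precisely such an approximate lifting property in the far broader setting of arbitrary surjections $\rho\colon B\to C$ between arbitrary $C^{*}$-algebras. Restricting attention to $\rho=\pi$ in the commutative, separable, unital case produces the required $\varphi_{n}$.

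The main obstacle is essentially nonexistent: the argument consists of matching the hypothesis to the hypothesis side of Corollary~\ref{cor:C*meaningOfAAR-pointed}. The substantive content lies in that earlier corollary, where the topological pointed AAR condition is translated into a $C^{*}$-algebraic lifting condition, and in the very definition of weak projectivity, which plainly subsumes the commutative unital case. Consequently the proof will be a short paragraph invoking these two items.
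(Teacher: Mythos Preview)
Your proposal is correct and matches the paper's (implicit) reasoning: the corollary is stated without proof precisely because it follows immediately from Corollary~\ref{cor:C*meaningOfAAR-pointed} together with the definition of weak projectivity, exactly as you outline. The only small point you might make explicit is that $X^{+}$ is indeed a compactum---compactness is automatic, and metrizability follows since weak projectivity forces $C_{0}(X)$ to be separable---so that Corollary~\ref{cor:C*meaningOfAAR-pointed} applies.
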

\begin{example}
\label{exa:topSineCurvebadPoint-NC}If $X$ is the topologist's sine
curve, and if $x_{1}$ is the point on the bottom-left of $X$ as
drawn in Example~\ref{example:topSinceCurve}, 
then $C_{0}\left(X\setminus\left\{ x_{1}\right\} \right)$
is not weakly projective.
\end{example}

\begin{thm}
\label{thm:WPfromPapproximations}
Suppose $A$ is a separable $C^{*}$-algebra
and that $\theta_{n}:A\rightarrow A$ is a sequence of $*$-homomorphisms
that converges to the identity map. If each $\theta_{n}(A)$ is weakly
projective then $A$ is weakly projective. If each $\theta_{n}(A)$
is weakly projective w.r.t.\ unital $C^{*}$-algebras then $A$ is
weakly projective w.r.t.~unital $C^{*}$-algebras. 
\end{thm}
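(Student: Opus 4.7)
The plan is to mimic the proof of Theorem~\ref{thm:approxByARisAAR}: route each lifting problem over $A$ through a weakly projective subalgebra $\theta_n(A)$, and then use $\theta_n \to \mathrm{id}_A$ to pass from $\varphi\circ\theta_n$ back to $\varphi$.

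Given a $*$-homomorphism $\varphi : A \to C$ and a surjection $\rho : B \to C$, I will set $A_n := \theta_n(A)$, which, as the image of a $*$-homomorphism from a separable $C^*$-algebra, is itself a separable $C^*$-subalgebra of $A$. Form the restriction $\varphi^{\sharp}_{n} := \varphi|_{A_n} : A_n \to C$. Since $A_n$ is weakly projective, there is for each $n$ a sequence $\psi_{n,k} : A_n \to B$ of $*$-homomorphisms with $\rho \circ \psi_{n,k} \to \varphi^{\sharp}_{n}$ pointwise as $k \to \infty$. Fixing a countable dense subset $\{a_1, a_2, \ldots\} \subset A$, a standard diagonal selection lets me choose $k(n)$ so that
\[
\bigl\| \rho\bigl(\psi_{n,k(n)}(\theta_n(a_i))\bigr) - \varphi(\theta_n(a_i)) \bigr\| < \frac{1}{n}
\]
for every $i \leq n$; such $k(n)$ exists because, for fixed $n$, this is only finitely many conditions on $k$, each satisfied for all large $k$ by the pointwise convergence of $\rho \circ \psi_{n,k}$ at the points $\theta_n(a_1), \ldots, \theta_n(a_n) \in A_n$. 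I will then set $\varphi_n := \psi_{n,k(n)} \circ \theta_n : A \to B$, which is a $*$-homomorphism as a composition of two.

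A triangle inequality combining the displayed estimate with $\|\varphi(\theta_n(a_i)) - \varphi(a_i)\| \to 0$ (which follows from $\theta_n(a_i) \to a_i$ and the contractivity of $\varphi$) yields $\rho \circ \varphi_n(a_i) \to \varphi(a_i)$ for each fixed $i$. Since every $\rho \circ \varphi_n$ is contractive, density of $\{a_i\}$ promotes pointwise convergence on this dense set to pointwise convergence $\rho \circ \varphi_n(a) \to \varphi(a)$ on all of $A$, which is exactly what is required.

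For the parenthetical statement about weak projectivity with respect to unital $C^*$-algebras, the identical construction applies: when $\rho$ is a unital surjection between unital $C^*$-algebras, those same hypotheses let me invoke the weak-projectivity-w.r.t.-unital property of $A_n$ on $\varphi^{\sharp}_{n}$, and everything proceeds without modification. I do not anticipate a substantive obstacle; the only delicate point is the sequencing of the two limits (first $k \to \infty$ for each $n$, then $n \to \infty$) through a single diagonal choice, and that is handled by the standard enumeration-and-$1/n$ bookkeeping indicated above.
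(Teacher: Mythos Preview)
Your proof is correct and follows essentially the same route as the paper's: restrict $\varphi$ to $\theta_n(A)$, use its weak projectivity to approximately lift, precompose with $\theta_n$, and combine with $\theta_n\to\mathrm{id}_A$ via a triangle inequality on a dense sequence. The only cosmetic difference is that the paper first passes to a subsequence of the $\theta_n$ to force $\|\theta_n(a_j)-a_j\|\leq 1/n$ for $j\leq n$, yielding an explicit $2/n$ bound, whereas you keep the original sequence and handle the two limits separately before the final density step.
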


\begin{proof}
Assume the $\theta_{n}(A)$ are weakly projective. 
Suppose $\rho:B\rightarrow C$
is a surjection of $C^{*}$-algebras and we are given
also a $*$-homomorphism $\varphi:A\rightarrow C.$
If $a_{1},a_{2},\ldots$ is a dense sequence
in $A$ then we can pass to a subsequence of the $\theta_{n}$ so
that
\[
\left\Vert \theta_{n}(a_{j})-a_{j}\right\Vert \leq \frac{1}{n}
\quad(1\leq j\leq n).
\]
We are now looking at
\[
\xymatrix{
  &&& B \ar[d] ^{\rho} \\
 A \ar[r] _(0.4){\theta_n } 
 	& *+<0.4cm,0.1cm>{\theta_{n}(A)} \ar@{^{(}->}[r]  
		& A \ar[r] _{\varphi}  
			& C
}
\]
Since $\theta_{n}(A)$ is
weakly projective there are $*$-homomorphisms $\varphi_{n}$ as in
this diagram
\[
\xymatrix{
  &&& B \ar[d] ^{\rho} \\
 A \ar[r] _(0.4){\theta_n }
 	& *+<0.4cm,0.1cm>{\theta_{n}(A)} \ar@{^{(}->}[r]  \ar[rru] |(0.16){\hole}^{\varphi_n}
 		& A \ar[r] _{\varphi}  & C
}
\]
with
\[
\left\Vert 
\rho\circ\varphi_{n}(\theta_{n}(a_{j}))-\varphi(\theta_{n}(a_{j}))
\right\Vert 
\leq\frac{1}{n}
\quad(1\leq j\leq n).
\]
Then
\begin{eqnarray*}
 &  &
 \left\Vert 
 \rho\circ\varphi_{n}\circ\theta_{n}(a_{j})-\varphi(a_{j})
 \right\Vert \\
 & \leq &
 \left\Vert
 \rho\circ\varphi_{n}(\theta_{n}(a_{j}))-\varphi(\theta_{n}(a_{j}))
 \right\Vert +
 \left\Vert 
 \varphi(\theta_{n}(a_{j})-a_{j})
 \right\Vert \\
 & \leq &
 \left\Vert
 \rho\circ\varphi_{n}(\theta_{n}(a_{j}))-\varphi(\theta_{n}(a_{j}))
 \right\Vert 
 +
 \left\Vert
 \theta_{n}(a_{j})-a_{j}
 \right\Vert \\
 & \leq &
 \frac{2}{n}
 \end{eqnarray*}
and so the $\varphi_{n}\circ\theta_{n}$ are the desired approximate
lifts.

The proof of the second statement is nearly identical, starting with
the extra assumptions that $B,$ $C$ and $\rho$ are unital.
\end{proof}
While $Y^{+}$ being an absolute retract does not generally lead to
$C_{0}(Y)$ being projective, we do know that $C_{0}(0,1]$ is projective.
This is enough to get the following example. One could get more exotic
examples by starting with more exotic projective $C^{*}$-algebras
as seen, for example, in \cite{LorPederProjTrans}.

\begin{example}
\label{exa:topSineCurvegoodPoint-NC} 
If $X$ is the topologist's
sine curve, and if $x_{0}$ is the point on the top-right of $X$
as drawn in Example~\ref{example:topSinceCurve}, then 
$C_{0}\left(X\setminus\left\{ x_{0}\right\} \right)$
is weakly projective. 
\end{example}

Examples~\ref{exa:topSineCurvebadPoint-NC} 
and \ref{exa:topSineCurvegoodPoint-NC}
show that it is possible to have $\widetilde{A}\cong\widetilde{B}$
with $A$ weakly projective and $B$ not weakly projective.

\begin{thm}
\label{thm:WPforUnitalForDescriptions}
If $A$ is a separable $C^{*}$-algebra
then the following are equivalent:
\begin{enumerate}
\item $A$ is weakly projective w.r.t.\ unital $C^{*}$-algebras;
\item for all separable, unital $C^{*}$-algebras $B$ and $C,$ and for
every $*$-homomorphism $\varphi:A\rightarrow C$ and every unital
surjection $\rho:B\rightarrow C,$ there is a 
sequence $\varphi_{n}:A\rightarrow B$
of $*$-homomorphisms so that $\rho\circ\varphi_{n}\rightarrow\varphi;$
\item for all unital $C^{*}$-algebras $B$ and $C,$ and for every unital
$*$-homomorphism $\varphi:\widetilde{A}\rightarrow C$ and every
unital surjection $\rho:B\rightarrow C,$ there is a 
sequence $\varphi_{n}:\widetilde{A}\rightarrow B$
of unital $*$-homomorphisms so that $\rho\circ\varphi_{n}\rightarrow\varphi.$
\item for all separable, unital $C^{*}$-algebras $B$ and $C,$ and for
every unital $*$-homomorphism $\varphi:\widetilde{A}\rightarrow C$
and every unital surjection $\rho:B\rightarrow C,$ there is a sequence
$\varphi_{n}:\widetilde{A}\rightarrow B$ of unital $*$-homomorphisms
so that $\rho\circ\varphi_{n}\rightarrow\varphi.$
\end{enumerate}
\end{thm}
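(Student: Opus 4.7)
The strategy is a cycle $(a) \Rightarrow (b) \Rightarrow (a)$, $(a) \Leftrightarrow (c)$, and $(c) \Leftrightarrow (d)$. The implications $(a) \Rightarrow (b)$ and $(c) \Rightarrow (d)$ are immediate, since weaker hypotheses on $B$ and $C$ give stronger conclusions. So there are really only two nontrivial pieces: a separable-reduction argument and a unitization correspondence.

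For $(b) \Rightarrow (a)$, I would mimic the proof of Theorem~\ref{thm:checkWPonlySeparableCase}, adjusting for units. Given a unital surjection $\rho : B \to C$ between unital (but possibly nonseparable) $C^*$-algebras and a $*$-homomorphism $\varphi : A \to C$, pick a dense sequence $(a_j)$ in $A$ and lifts $b_j \in B$ with $\rho(b_j) = \varphi(a_j)$. Let $B_0 \subseteq B$ be the unital $C^*$-subalgebra generated by $\{b_j\}$ together with $1_B$; this is separable and unital. Set $C_0 = \rho(B_0)$. Then $C_0$ is a unital separable subalgebra of $C$, and because $C_0$ contains each $\varphi(a_j)$, density and continuity force $C_0 \supseteq \varphi(A)$. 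Now $\rho|_{B_0} : B_0 \to C_0$ is a unital surjection between separable unital $C^*$-algebras, and $\varphi$ factors through $C_0$, so (b) supplies the desired approximate lifts into $B_0 \hookrightarrow B$. The argument for $(d) \Rightarrow (c)$ is verbatim the same, with $\widetilde{A}$ in place of $A$ and the observation that $\varphi(\widetilde{A}) = \varphi(A) + \mathbb{C}\cdot 1_C \subseteq C_0$.

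For $(a) \Leftrightarrow (c)$, I would use the standard unitization/restriction bijection. Any $*$-homomorphism $\psi : A \to D$ with $D$ unital extends uniquely to a unital $\widetilde\psi : \widetilde{A} \to D$ by $\widetilde\psi(a + \lambda 1) = \psi(a) + \lambda 1_D$, and conversely any unital $\widetilde\psi : \widetilde{A} \to D$ restricts to $\widetilde\psi|_A : A \to D$. These operations are mutually inverse. Moreover, for any $a + \lambda 1 \in \widetilde{A}$,
\[
\bigl\| \rho \circ \widetilde{\varphi_n}(a+\lambda 1) - \varphi(a+\lambda 1) \bigr\|
= \bigl\| \rho \circ \varphi_n(a) - \varphi|_A(a) \bigr\|,
\]
so convergence of approximate lifts is preserved in both directions. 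Thus $(a)$ applied to $\varphi|_A$ yields approximate lifts whose unitizations witness $(c)$ for $\varphi$, and conversely $(c)$ applied to the unitization of any $\varphi : A \to C$ restricts to $(a)$.

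The routine-looking separable reduction is the one place that requires some care: I must ensure the chosen $B_0$ is unital in $B$ (hence $1_{B_0} = 1_B$) so that $\rho|_{B_0}$ remains a unital surjection onto $C_0 = \rho(B_0)$. Once that bookkeeping is handled the argument is mechanical. No deep obstacle arises, because the hypothesis that $\rho$ be unital is already built in and $\widetilde{A}$ is already separable whenever $A$ is, so all four conditions are genuinely just reparametrizations of one another.
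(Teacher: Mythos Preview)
Your proposal is correct and follows essentially the same approach as the paper: the separable reduction is exactly the paper's modification of Theorem~\ref{thm:checkWPonlySeparableCase} (adjoining $1_B$ to the generators of $B_0$), and your unitization/restriction bijection for $(a)\Leftrightarrow(c)$ is precisely what the paper does, though the paper writes out the two directions separately rather than packaging them as a single correspondence. Your explicit remark that $1_{B_0}=1_B$ is needed for $\rho|_{B_0}$ to remain unital is the one point the paper leaves implicit.
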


\begin{proof}
The proof of Theorem~\ref{thm:checkWPonlySeparableCase} works to
show the equivalence of (a) and (b) so long as we set $B_{0}$ to
be the $C^{*}$-subalgebra generated by the $b_{k}$ and $1_{B}.$
Just as easily, we get the equivalence of (c) and (d)

Assume (a), and suppose we are given $B$ and $C$ unital and separable,
$\rho:B\rightarrow C$ a unital surjection 
and $\varphi:\widetilde{A}\rightarrow C$
unital. The assumption on $A$ give us the $\psi_{n}$ in this diagram,
\[
\xymatrix{
	&	& B \ar[d]^{\rho}\\
*+<0.4cm,0.1cm>{A} \ar@{^{(}->}[r] \ar@{-->}[rru] |(0.16){\hole}^{\psi_n }
	&{\widetilde {A}} \ar[r] ^{\varphi} 
		& C
}
\]
with $\rho\circ\psi_{n}(a)\rightarrow\varphi(a)$
for all $a$ in $A.$ We can extend $\psi_{n}$ to a unital $*$-homomorphism
$\varphi_{n}$ on $\widetilde{A}$ by 
\[
\varphi_{n}(a+\alpha\mathbb{1})=\psi_{n}(a)+\alpha1_{B}.
\]
Then
\begin{align*}
\rho\circ\varphi_{n}(a+\alpha\mathbb{1}) 
 & =\rho(\psi_{n}(a)+\alpha1_{B})\\
 & =\rho(\psi_{n}(a))+\alpha\varphi(\mathbb{1})\\
 & \rightarrow\varphi(a)+\alpha\varphi(\mathbb{1})\\
 & =\varphi(a+\alpha\mathbb{1}),
\end{align*}
and we have verified (c).

Assume (c), and suppose $B$ and $C$ are separable and unital and
we are given a $*$-homomorphism $\varphi:A\rightarrow C$ and a unital
surjection $\rho:B\rightarrow C.$ We can extend $\varphi$ to a unital
$\overline{\varphi}$ by 
\[
\overline{\varphi}(a+\alpha\mathbb{1})=\varphi(a)+\alpha1_{C}.
\]
 The assumption on $A$ now gives us the unital $*$-homomorphisms
$\psi_{n}$ in this diagram, 
\[
\xymatrix{
	&	&B \ar[d] ^{\rho} \\
*+<0.4cm,0.1cm>{A} \ar@{^{(}->}[r] \ar@/_0.4cm/[rr] _{\varphi}
	&{\widetilde {A}}  \ar[r] ^{\overline {\varphi}} \ar@{-->}[ru] ^{\psi_n}
		& C
}
\]
with 
\[
\rho\circ\psi_{n}(a+\alpha\mathbb{1})
\rightarrow
\overline{\varphi}(a+\alpha\mathbb{1}).
\]
We take for the needed approximate lifts the restriction
of the $\psi_{n}$ to $A.$ We have verified (a).
\end{proof}

\begin{cor}
Suppose $X$ is a locally compact, metrizable space. If $C_{0}(X)$
is weakly projective w.r.t.\ unital $C^{*}$-algebras then $X^{+}$
is an AAR .
\end{cor}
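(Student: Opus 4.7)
The plan is to combine Corollary~\ref{cor:C*meaningOfAAR}, which characterizes AAR purely in terms of approximate unital lifts of unital $*$-homomorphisms out of $C(X^+)$, with the characterization of weak projectivity with respect to unital $C^*$-algebras given in Theorem~\ref{thm:WPforUnitalForDescriptions}, specifically its equivalent form $(d)$ involving $\widetilde{A}$.

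First I would observe the key identification that $C(X^+) \cong \widetilde{C_0(X)}$, whether $X$ is compact (in which case the unitization adjoins the isolated extra point of $X^+$) or non-compact (in which case $X^+$ is the usual one-point compactification and the unitization is the standard one). This identification sends the canonical character $\lambda:\widetilde{C_0(X)}\to\mathbb{C}$ to the evaluation at $\infty$.

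Next, to verify that $X^+$ is an AAR via Corollary~\ref{cor:C*meaningOfAAR}, let $\pi:B\to C$ be a unital surjection between separable unital commutative $C^*$-algebras and let $\varphi:C(X^+)\to C$ be a unital $*$-homomorphism. Under the identification above, $\varphi$ is a unital $*$-homomorphism $\widetilde{C_0(X)}\to C$. Since $B$ and $C$ are (in particular) separable and unital, we may apply condition $(d)$ of Theorem~\ref{thm:WPforUnitalForDescriptions} to the hypothesis that $C_0(X)$ is weakly projective with respect to unital $C^*$-algebras, obtaining a sequence of unital $*$-homomorphisms $\varphi_n:\widetilde{C_0(X)}\to B$ with $\pi\circ\varphi_n\to\varphi$ pointwise in norm. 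Translating back, the $\varphi_n$ are unital $*$-homomorphisms $C(X^+)\to B$ with the required approximate lifting property, and Corollary~\ref{cor:C*meaningOfAAR} then declares $X^+$ an AAR.

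There is no real obstacle here; the only point to be careful about is to use form $(d)$ rather than $(a)$ of Theorem~\ref{thm:WPforUnitalForDescriptions}, since the AAR characterization demands that each approximating $\varphi_n$ itself be unital on the whole of $C(X^+)$, not merely on the non-unital ideal $C_0(X)$. The commutativity of $B$ and $C$ is used only to feed into Corollary~\ref{cor:C*meaningOfAAR}; it plays no role in obtaining the lifts.
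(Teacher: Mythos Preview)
Your argument is correct and is precisely the intended route: the corollary is placed immediately after Theorem~\ref{thm:WPforUnitalForDescriptions} because condition~(d) there, combined with the identification $\widetilde{C_0(X)}\cong C(X^{+})$, feeds directly into the $C^{*}$-algebraic characterization of AAR in Corollary~\ref{cor:C*meaningOfAAR}. Your remark about needing form~(d) rather than~(a), so that the approximate lifts are themselves unital on all of $C(X^{+})$, is exactly the right observation.
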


\begin{cor}
Suppose $A$ and $B$ are separable $C^{*}$-algebras. If $A$ weakly
projective w.r.t.~unital $C^{*}$-algebras
and $\widetilde{A}\cong\widetilde{B}$
then $B$ is weakly projective w.r.t.~unital $C^{*}$-algebras.
\end{cor}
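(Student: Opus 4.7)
The plan is to exploit the fact that condition (c) of Theorem~\ref{thm:WPforUnitalForDescriptions} is phrased entirely in terms of the unitization $\widetilde{A}$ and makes no reference to $A$ itself. Consequently, this condition is preserved under any isomorphism of unitizations, and the corollary follows by invoking the theorem twice.

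In more detail, I would first apply Theorem~\ref{thm:WPforUnitalForDescriptions}, part (a)$\Leftrightarrow$(c), to $A$: since $A$ is weakly projective w.r.t.\ unital $C^{*}$-algebras, for every unital $C^{*}$-algebra $C$, every unital surjection $\rho:D\to C$ of unital $C^{*}$-algebras, and every unital $*$-homomorphism $\varphi:\widetilde{A}\to C$, there exist unital $*$-homomorphisms $\psi_{n}:\widetilde{A}\to D$ with $\rho\circ\psi_{n}\to\varphi$.

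Next I would transport this statement across the isomorphism. Fix a unital $*$-isomorphism $\Phi:\widetilde{B}\to\widetilde{A}$. Given any unital surjection $\rho:D\to C$ and any unital $*$-homomorphism $\varphi':\widetilde{B}\to C$, define $\varphi:=\varphi'\circ\Phi^{-1}:\widetilde{A}\to C$. Applying condition (c) for $\widetilde{A}$ produces unital $\psi_{n}:\widetilde{A}\to D$ with $\rho\circ\psi_{n}\to\varphi$, and then $\psi_{n}':=\psi_{n}\circ\Phi:\widetilde{B}\to D$ are unital $*$-homomorphisms with $\rho\circ\psi_{n}'\to\varphi'$. Hence $\widetilde{B}$ satisfies condition (c) of Theorem~\ref{thm:WPforUnitalForDescriptions}.

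Finally, applying the (c)$\Rightarrow$(a) direction of Theorem~\ref{thm:WPforUnitalForDescriptions} to $B$ yields that $B$ is weakly projective w.r.t.\ unital $C^{*}$-algebras. There is essentially no obstacle here; the content has been packaged into Theorem~\ref{thm:WPforUnitalForDescriptions}, whose equivalence of (a) and (c) is precisely what makes weak projectivity w.r.t.\ unital $C^{*}$-algebras an invariant of the unitization.
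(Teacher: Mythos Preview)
Your proposal is correct and is exactly the intended argument: the paper states this as an immediate corollary of Theorem~\ref{thm:WPforUnitalForDescriptions} with no proof, and you have simply spelled out that condition~(c) there depends only on $\widetilde{A}$, hence is invariant under $\widetilde{A}\cong\widetilde{B}$.
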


We present the analogs of Theorems~\ref{thm:AARbyApproxExtension}
and \ref{thm:AARbyApproxExtension-pointed}. We also include analogs
of the fact that if $X$ is a compact subset of $[0,1]^{n}$ that
to prove $X$ as an AAR, it suffices to show $X$ is an approximate
retract of $[0,1]^{n}.$ There is a similar statement involving the
Hilbert cube.

The replacement for $[0,1]^{n}$ is a projective $C^{*}$-algebra,
such as the universal $C^{*}$-algebra generated by $n$-contractions.
Such an object is an acquired taste, so we state our result to allow
for a choice of projective $C^{*}$-algebra. The point is that to
test a given $A$ it suffices to work with a single map onto $A$
from a single projective.

\begin{thm}
Suppose $A$ is a separable $C^{*}$-algebra. Each of the following
two conditions is equivalent to $A$ being weakly projective:
\begin{enumerate}
\item for every $C^{*}$-algebra $B,$ and for every surjection $\rho:B\rightarrow A,$
there is a sequence $\theta_{n}:A\rightarrow B$ of $*$-homomorphisms
so that $\rho\circ\theta_{n}(a)\rightarrow a$ for all $a$ in $A;$
\item there exists a projective $C^{*}$-algebra $P$ and surjection $\rho:P\rightarrow A$
for which there is a sequence $\theta_{n}:A\rightarrow P$ of $*$-homomorphisms
so that $\rho\circ\theta_{n}(a)\rightarrow a$ for all $a$ in $A.$ 
\end{enumerate}
\end{thm}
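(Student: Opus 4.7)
The plan is to prove the circular implications: weakly projective $\Rightarrow$ (a) $\Rightarrow$ (b) $\Rightarrow$ weakly projective. The first implication is immediate: in the definition of weak projectivity, set $C = A$ and $\varphi = \mathrm{id}_A$, and the $\theta_n$ demanded by (a) fall out as the approximate lifts of the identity.

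For (a) $\Rightarrow$ (b), the key input is the standard fact that every separable $C^*$-algebra is a quotient of a projective $C^*$-algebra; for instance, the universal $C^*$-algebra generated by countably many contractions (or countably many contractions with appropriate norm bounds) is projective and surjects onto any separable $A$ by sending the generators to a dense sequence of contractions. Fixing any such projective $P$ and surjection $\rho:P\rightarrow A$, condition (a) hands us the sequence $\theta_n:A\rightarrow P$ required by (b).

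The heart of the argument is (b) $\Rightarrow$ weakly projective, and this is where projectivity of $P$ does the work. Given an arbitrary surjection $\pi:B\rightarrow C$ and a $*$-homomorphism $\varphi:A\rightarrow C$, consider the composition $\varphi\circ\rho:P\rightarrow C$. By projectivity of $P$, there is an exact lift $\psi:P\rightarrow B$ with $\pi\circ\psi = \varphi\circ\rho$. Setting $\varphi_n = \psi\circ\theta_n:A\rightarrow B$, we compute
\[
\pi\circ\varphi_n = \pi\circ\psi\circ\theta_n = \varphi\circ\rho\circ\theta_n,
\]
which converges pointwise to $\varphi\circ\mathrm{id}_A = \varphi$ since $\rho\circ\theta_n(a)\rightarrow a$ and $\varphi$ is norm-continuous. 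Thus $\varphi_n$ are the approximate lifts demanded by weak projectivity.

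The only potential obstacle is ensuring that a projective $C^*$-algebra surjecting onto $A$ actually exists for the step (a) $\Rightarrow$ (b); but this is a well-known feature of Blackadar's shape theory and is exactly the sort of fact that the theorem statement invites us to invoke ("a projective $C^*$-algebra, such as the universal $C^*$-algebra generated by $n$-contractions"). Once that is granted, the argument is purely formal, and the construction of $\varphi_n$ by precomposing a single exact lift $\psi$ with the approximate self-lifts $\theta_n$ is the central idea that parallels the topological situation of reducing to a single retract onto $A$ from a "nice" enveloping object.
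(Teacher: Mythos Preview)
Your proof is correct and follows the same route as the paper: weakly projective $\Rightarrow$ (a) by approximately lifting $\mathrm{id}_A$, (a) $\Rightarrow$ (b) via the existence of a projective $C^*$-algebra surjecting onto any separable $A$, and (b) $\Rightarrow$ weakly projective by lifting $\varphi\circ\rho$ exactly to some $\psi:P\to B$ and setting $\varphi_n=\psi\circ\theta_n$. The paper simply declares (a) $\Rightarrow$ (b) ``obvious,'' whereas you spell out the needed input; otherwise the arguments coincide.
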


\begin{proof}
Suppose $A$ is weakly projective. Given $\rho:B\rightarrow A$ a
surjection, we can approximately lift the identity map on $A$ as
in this diagram: 
\[
\xymatrix{
	&B \ar[d]^{\rho}\\
A \ar@{=}[r] \ar@{-->}[ur] ^{\theta_n}
	& A
}  \]
We have proven (a), and it is
obvious that (a) implies (b).

Suppose we are given $\varphi:A\rightarrow C$ and 
a surjection $\pi:B\rightarrow C.$
Since $P$ is projective, we can find $\psi$ to make this diagram
commute:
\[
\xymatrix{
P \ar[d] ^{\rho} \ar[r] ^{\psi}
	& B \ar[d] ^{\pi}\\
A \ar[r] _{\varphi}
	& C
}  \]
The maps $\varphi_{n}=\psi\circ\theta_{n}$
show $A$ is weakly projective.
\end{proof}

\begin{thm}
Suppose $A$ is a separable $C^{*}$-algebra. Each of the following
two conditions is equivalent to $A$ being weakly
projective w.r.t.~unital $C^{*}$-algebras:

\begin{enumerate}
\item for every unital $C^{*}$-algebra $B,$ and for every unital surjection
$\rho:B\rightarrow\widetilde{A},$ there is a
sequence $\theta_{n}:\widetilde{A}\rightarrow B$
of unital $*$-homomorphisms so that $\rho\circ\theta_{n}(a)\rightarrow a$
for all $a$ in $\widetilde{A};$ 
\item there exists a projective $C^{*}$-algebra $P$ and a unital surjection
$\rho:\widetilde{P}\rightarrow\widetilde{A}$ for which there is a
sequence $\theta_{n}:\widetilde{A}\rightarrow\widetilde{P}$ of unital
$*$-homomorphisms so that $\rho\circ\theta_{n}(a)\rightarrow a$
for all $a$ in $\widetilde{A}.$ 
\end{enumerate}
\end{thm}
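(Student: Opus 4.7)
The strategy is to exploit Theorem~\ref{thm:WPforUnitalForDescriptions}, particularly its characterization (c), which lets us work entirely in the unital category with $\widetilde{A}$ as the source. The three conditions form a natural cycle: \emph{weakly projective w.r.t.\ unital} $\Rightarrow$ (a) $\Rightarrow$ (b) $\Rightarrow$ \emph{weakly projective w.r.t.\ unital}.

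First, to get (a), I apply condition (c) of Theorem~\ref{thm:WPforUnitalForDescriptions} with $C=\widetilde{A}$ and $\varphi = \mathrm{id}_{\widetilde{A}}$: the unital surjection $\rho\colon B\to\widetilde{A}$ admits a sequence of unital approximate lifts $\theta_n\colon\widetilde{A}\to B$ of the identity, which is exactly the statement required. The implication (a) $\Rightarrow$ (b) needs the existence of \emph{some} projective $C^*$-algebra $P$ surjecting onto $A$. Since $A$ is separable, one can choose a countable set of contractive generators and let $P$ be the universal $C^*$-algebra generated by countably many contractions, which is projective and surjects onto $A$ via some $\rho_0$; unitizing gives a unital surjection $\rho\colon\widetilde{P}\to\widetilde{A}$. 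Now apply (a) to $\rho$ to manufacture the $\theta_n$.

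The content of the theorem lies in (b) $\Rightarrow$ \emph{weakly projective w.r.t.\ unital}. I will verify condition (c) of Theorem~\ref{thm:WPforUnitalForDescriptions}. Suppose $\pi\colon B\to C$ is a unital surjection between unital $C^*$-algebras and $\varphi\colon\widetilde{A}\to C$ is a unital $*$-homomorphism. Consider the unital composition $\varphi\circ\rho\colon\widetilde{P}\to C$. Because $P$ is projective, the restriction $(\varphi\circ\rho)|_P\colon P\to C$ lifts to a (not necessarily unital) $*$-homomorphism $\sigma\colon P\to B$ with $\pi\circ\sigma = \varphi\circ\rho|_P$. Extend $\sigma$ to a unital $\psi\colon\widetilde{P}\to B$ by $\psi(p+\alpha\mathbb{1}) = \sigma(p)+\alpha 1_B$; since $\pi$ is unital, one checks immediately that $\pi\circ\psi = \varphi\circ\rho$ on all of $\widetilde{P}$. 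Then set $\varphi_n := \psi\circ\theta_n\colon\widetilde{A}\to B$, which is unital, and compute
\[
\pi\circ\varphi_n = \pi\circ\psi\circ\theta_n = \varphi\circ\rho\circ\theta_n \longrightarrow \varphi,
\]
using pointwise convergence $\rho\circ\theta_n\to\mathrm{id}_{\widetilde{A}}$ and continuity of $\varphi$. This verifies (c) of Theorem~\ref{thm:WPforUnitalForDescriptions} and closes the cycle.

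The only mild subtlety is the unitization step in the last implication: a lift $\sigma\colon P\to B$ provided by projectivity need not respect any unit structure, but extending by $\sigma(p)+\alpha1_B$ is forced to be unital, and the identity $\pi(1_B)=1_C=\varphi(\mathbb{1})$ makes the extended lift of $\varphi\circ\rho$ automatic. No real obstacle arises; the proof is essentially an organized application of Theorem~\ref{thm:WPforUnitalForDescriptions} together with the existence of a projective cover for any separable $C^*$-algebra.
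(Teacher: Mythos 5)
Your proof is correct and follows essentially the same route as the paper: the cycle (WP w.r.t.\ unital) $\Rightarrow$ (a) $\Rightarrow$ (b) $\Rightarrow$ (WP w.r.t.\ unital), with the last step lifting $\varphi\circ\rho$ through $\pi$ using projectivity of $P$ and unitality of $B$, then composing with the $\theta_n$. The only cosmetic difference is that you phrase the first and last steps via condition (c) of Theorem~\ref{thm:WPforUnitalForDescriptions} rather than extending a map $\varphi:A\to C$ to $\overline{\varphi}$ on $\widetilde{A}$ as the paper does; these are equivalent by that theorem.
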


\begin{proof}
Suppose $A$ is weakly projective w.r.t.~unital $C^{*}$-algebras.
Given $\rho:B\rightarrow\widetilde{A}$ a unital surjection, we can
approximately lift the identity map on $\widetilde{A},$ as in this
diagram:
\[
\xymatrix{
	&B \ar[d]^{\rho}\\
{\widetilde {A}} \ar@{=}[r] \ar@{-->}[ur] ^{\theta_n}
	& {\widetilde {A}}
}
\]
We have proven (a). Again it is obvious
that (a) implies (b).

Suppose we are given $\varphi:A\rightarrow C$ and a unital surjection
$\pi:B\rightarrow C.$ Since $C$ is unital, we can extend $\varphi$
to a unital $*$-homomorphism
$\overline{\varphi}:\widetilde{A}\rightarrow C.$
Since $P$ is projective and $B$ is unital, we can find $\psi$ a
unital $*$-homomorphism to make this diagram commute:
\[
\xymatrix{
	&{\widetilde {P}} \ar[d] ^{\rho} \ar[r] ^{\psi}
		& B \ar[d] ^{\pi}\\
*+<0.4cm,0.1cm>{A} \ar@{^{(}->}[r] \ar@/_0.4cm/[rr] _{\varphi}
		&{\widetilde {A}}  \ar[r] ^{\overline {\varphi}}
		& C
}
\]
The
maps $\varphi_{n}=\psi\circ\theta_{n}$ show $A$ is weakly projective
w.r.t.~unital $C^{*}$-algebras.
\end{proof}

\section{Properties of Weakly Projective C*-algebras.}

\begin{defn}
A quotient $B=A/I$ of a separable $C^{*}$-algebra $A$ is an 
\emph{approximate retract of} $A$ if there is a
sequence $\lambda_{n}:B\rightarrow A$
of $*$-homomorphisms so that $\rho\circ\lambda_{n}(b)\rightarrow b$
for all $b$ in $B.$ Here $\rho$ is the canonical surjection.
\end{defn}

We we use WP to stand for weakly projective.

\begin{prop}
\label{pro:approxRetractOfWPisWP}
An approximate retract of a separable
WP $C^{*}$-algebra is WP.
\end{prop}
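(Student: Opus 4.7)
The plan is as follows. Let $\rho\colon A\to B$ be the canonical surjection, with $\lambda_n\colon B\to A$ a sequence of $*$-homomorphisms satisfying $\rho\circ\lambda_n(b)\to b$ for every $b\in B$. Suppose we are given a surjection $\pi\colon D\to C$ and a $*$-homomorphism $\varphi\colon B\to C$. The goal is to produce $*$-homomorphisms $\varphi_n\colon B\to D$ with $\pi\circ\varphi_n\to\varphi$ pointwise. The natural candidate, set up in the diagram
\[
\xymatrix{
& & D\ar[d]^{\pi}\\
B\ar[r]_{\lambda_n} & A\ar[r]_{\rho}\ar@{-->}[ru]^{\psi_n} & C,
}
\]
is to take $\psi_n\colon A\to D$ approximately lifting $\varphi\circ\rho$ (available since $A$ is weakly projective) and then set $\varphi_n=\psi_n\circ\lambda_n$.

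The main obstacle is that pointwise convergence $\pi\circ\psi_n\to\varphi\circ\rho$ on $A$ does not immediately give $\pi\circ\psi_n(\lambda_n(b))\to\varphi(b)$, because the argument $\lambda_n(b)$ moves with $n$. This must be resolved by a diagonal choice: choose $\psi_n$ after $\lambda_n$ is fixed, and only demand the approximate lift on a finite, $n$-dependent set of elements of $A$.

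Concretely, fix a dense sequence $b_1,b_2,\ldots$ in $B$. By passing to a subsequence we may assume
\[
\left\Vert \rho\circ\lambda_n(b_j)-b_j\right\Vert \leq\frac{1}{n}
\quad(1\leq j\leq n).
\]
Since $A$ is WP, applied to $\varphi\circ\rho\colon A\to C$ and $\pi\colon D\to C$, for each $n$ we can choose $\psi_n\colon A\to D$ with
\[
\left\Vert \pi\circ\psi_n(\lambda_n(b_j))-\varphi\circ\rho(\lambda_n(b_j))\right\Vert \leq\frac{1}{n}
\quad(1\leq j\leq n).
\]
Set $\varphi_n=\psi_n\circ\lambda_n$. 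The triangle inequality, together with $\|\varphi\|\leq 1$, yields
\[
\left\Vert \pi\circ\varphi_n(b_j)-\varphi(b_j)\right\Vert
\leq
\left\Vert \pi\circ\psi_n(\lambda_n(b_j))-\varphi(\rho(\lambda_n(b_j)))\right\Vert
+
\left\Vert \rho(\lambda_n(b_j))-b_j\right\Vert
\leq \frac{2}{n}
\]
for $1\leq j\leq n$, so $\pi\circ\varphi_n(b_j)\to\varphi(b_j)$ for every $j$.

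Finally, since all maps involved are contractive $*$-homomorphisms and $\{b_j\}$ is dense in $B$, a standard $\varepsilon/3$ argument promotes this to $\pi\circ\varphi_n(b)\to\varphi(b)$ for all $b\in B$, proving $B$ is WP. The same argument, with ``unital'' inserted throughout and working with the dense sequence in $\widetilde{B}$, gives the analogous statement for weak projectivity with respect to unital $C^*$-algebras, should it be needed.
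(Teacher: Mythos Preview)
Your proof is correct and follows exactly the approach the paper intends: it is the argument of Theorem~\ref{thm:WPfromPapproximations} transported to the present setting, with the diagonal choice of $\psi_n$ on the moving finite set $\{\lambda_n(b_1),\ldots,\lambda_n(b_n)\}$ playing the same role as there. One cosmetic slip: in your diagram the arrow $A\to C$ should be labeled $\varphi\circ\rho$, not $\rho$.
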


\begin{proof}
The proof is very similar to that of Theorem~\ref{thm:WPfromPapproximations}.
\end{proof}

We use RFD to stand for residually finite dimensional. Recall that
$A$ is RFD if $A$ has a separating family of finite dimensional
representations. To read about other properties equivalent to this,
see \cite{ArchboldRFD,ExelLoringRFDfreeProd}.

\begin{prop}
An approximate retract of a separable RFD $C^{*}$-algebra is RFD.
\end{prop}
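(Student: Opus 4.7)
The plan is to verify RFD point-wise: for each nonzero $b \in B$, I will produce a finite-dimensional $*$-representation $\pi: B \to M_k$ with $\pi(b) \neq 0$. The maps $\lambda_n: B \to A$ and the RFD representations of $A$ will be composed to give representations of $B$, and the approximate retract property will guarantee that nothing is lost in the process.

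First I would fix an arbitrary nonzero $b \in B$. The hypothesis $\rho \circ \lambda_n(b) \to b$ in norm implies that $\|\rho(\lambda_n(b))\| \to \|b\| > 0$, so for all sufficiently large $n$ the element $\rho(\lambda_n(b))$ is nonzero, and hence $\lambda_n(b)$ itself is nonzero. I would fix one such $n$.

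Next, since $A$ is RFD, there exists a finite-dimensional $*$-representation $\sigma: A \to M_k$ with $\sigma(\lambda_n(b)) \neq 0$. Setting
\[
\pi := \sigma \circ \lambda_n : B \longrightarrow M_k,
\]
I obtain a finite-dimensional $*$-representation of $B$ with $\pi(b) = \sigma(\lambda_n(b)) \neq 0$. Since $b \in B \setminus \{0\}$ was arbitrary, the family of all such representations $\pi$ separates the points of $B$, which is exactly the RFD condition for $B$.

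There is no real obstacle here: the argument is immediate once one realizes that the approximate retract property is strong enough to guarantee $\lambda_n(b) \neq 0$ eventually, so the composition with a witnessing representation of $A$ does the job. This is the same pattern as in the proof of Proposition~\ref{pro:approxRetractOfWPisWP}, but simplified by the fact that RFD is a pointwise separation condition rather than a lifting condition, so one does not need any bookkeeping over a dense sequence or any diagonalization.
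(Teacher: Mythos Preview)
Your proof is correct and is essentially identical to the paper's: fix a nonzero $b\in B$, use $\rho\circ\lambda_n(b)\to b$ to find $n$ with $\lambda_n(b)\neq 0$, then compose $\lambda_n$ with a finite-dimensional representation of $A$ that does not annihilate $\lambda_n(b)$. The only cosmetic difference is that the paper phrases the first step as choosing $m$ with $\|\lambda_m(b)\|\geq\tfrac{1}{2}\|b\|$, which amounts to the same observation.
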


\begin{proof}
Given nonzero $b$ in $B$ we may find $m$ so that 
$\lambda_{m}(b)\geq\frac{1}{2}\|b\|.$
Now take a finite dimensional representation of $A$ 
with $\pi(\lambda_{m}(b))\neq0.$
Then $\pi\circ\lambda_{m}$ is a finite dimensional representation
of $B$ that does not send $b$ to zero.
\end{proof}

\begin{thm}
A $C^{*}$-algebra that is weakly projective w.r.t.~unital $C^{*}$-algebras
is RFD.
\end{thm}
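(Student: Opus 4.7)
The plan is to exploit Theorem~\ref{thm:WPforUnitalForDescriptions}(c) by mapping a known unital RFD $C^{*}$-algebra onto $\widetilde{A}$, approximately lifting the identity on $\widetilde{A}$, and then pushing finite-dimensional representations back to $A$.

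First I would fix a dense sequence $\{a_{j}\}_{j\geq 1}$ in the closed unit ball of $\widetilde{A}$ and take $B$ to be the universal unital $C^{*}$-algebra on countably many self-adjoint contractions $\{u_{j},v_{j}\}_{j\geq 1}$; equivalently, $B$ is the unital full free product of countably many copies of $C([-1,1])$. By the Exel--Loring theorem \cite{ExelLoringRFDfreeProd}, every finite unital full free product of unital RFD algebras is RFD, and since any finite-dimensional unital representation of the stage-$n$ algebra extends to the stage-$(n{+}1)$ algebra by sending the new self-adjoint generator to $0\in M_{k}$, a routine direct-limit argument then shows that $B$ itself is RFD. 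The assignment $u_{j}\mapsto (a_{j}+a_{j}^{*})/2$ and $v_{j}\mapsto (a_{j}-a_{j}^{*})/(2i)$ extends, by the universal property of $B$, to a unital $*$-homomorphism $\rho\colon B\to\widetilde{A}$. Its image is a $C^{*}$-subalgebra containing $\{a_{j}\}\cup\{1\}$, which is dense in $\widetilde{A}$, so $\rho$ is a unital surjection.

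Next I would apply Theorem~\ref{thm:WPforUnitalForDescriptions}(c) with $C=\widetilde{A}$, $\varphi=\mathrm{id}_{\widetilde{A}}$, and the unital surjection $\rho\colon B\to\widetilde{A}$ just constructed, obtaining unital $*$-homomorphisms $\psi_{n}\colon\widetilde{A}\to B$ with $\rho\circ\psi_{n}(x)\to x$ for every $x\in\widetilde{A}$. Given any nonzero $a\in A$, contractivity of $\rho$ yields $\|\psi_{n}(a)\|\geq\|\rho\circ\psi_{n}(a)\|\to\|a\|>0$, so $\psi_{n}(a)\neq 0$ for large $n$. Since $B$ is RFD, there is a finite-dimensional representation $\sigma\colon B\to M_{k}$ with $\sigma(\psi_{n}(a))\neq 0$, and then $\sigma\circ\psi_{n}|_{A}\colon A\to M_{k}$ is a finite-dimensional representation of $A$ not killing $a$. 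As $a$ was arbitrary, this produces a separating family of finite-dimensional representations of $A$, so $A$ is RFD.

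The one step that is not automatic is the RFD-ness of $B$. I would leave that to a direct appeal to \cite{ExelLoringRFDfreeProd} together with the direct-limit argument above; everything else is a pure application of the characterization of weak projectivity w.r.t.\ unital $C^{*}$-algebras established in Theorem~\ref{thm:WPforUnitalForDescriptions}.
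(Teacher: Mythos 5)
Your proof is correct, and it follows the same overall skeleton as the paper's argument: exhibit $\widetilde{A}$ as an approximate retract of a unital RFD $C^{*}$-algebra and pull finite-dimensional representations back through the approximate liftings. The difference is where the RFD algebra comes from. The paper uses its unital test theorem to realize $\widetilde{A}$ as an approximate retract of $\widetilde{P}$ with $P$ projective, and then simply quotes the fact that projective $C^{*}$-algebras are RFD; you instead build a concrete unital free algebra $B$ surjecting onto $\widetilde{A}$ and certify its RFD-ness via Exel--Loring. In effect you are inlining the standard proof that projectives are RFD, which makes your version more self-contained but puts one extra burden on you: the ``routine direct-limit argument.'' Be careful there --- an inductive limit of RFD algebras with injective connecting maps need \emph{not} be RFD (UHF algebras are the standard counterexample), so the argument genuinely depends on the extension property you mention: extend a finite-dimensional representation of a finite stage that nearly attains the norm of an approximating element to all of $B$ by killing the remaining generators, then let the approximation improve. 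Since you flag that extension step explicitly, the gap closes, but it should not be presented as a consequence of ``direct limit of RFD'' alone. Everything else --- the surjectivity of $\rho$ (closed range plus dense image), the application of Theorem~\ref{thm:WPforUnitalForDescriptions}(c) with $\varphi=\mathrm{id}_{\widetilde{A}}$, and the pullback of representations via $\|\psi_{n}(a)\|\geq\|\rho\circ\psi_{n}(a)\|\to\|a\|$ --- is exactly the content of the paper's proposition that approximate retracts of separable RFD algebras are RFD, and is correct.
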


\begin{proof}
If $A$ is weakly projective w.r.t.~unital $C^{*}$-algebras it is
an approximate retract of the unitization of a projective $C^{*}$-algebra.
Projective $C^{*}$-algebras are 
RFD (\cite[Theorem 11.2.1]{Loring-lifting-perturbing}),
and therefore so are their unitizations.
\end{proof}

\begin{lem}
\label{lem:invariantsVanish}
If $A$ is weakly projective and $D$
is semiprojective then $[D,A]$ is trivial.
\end{lem}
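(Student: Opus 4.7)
The claim reduces to showing that every $*$-homomorphism $\varphi\colon D\to A$ is null-homotopic. My plan is to produce a sequence of manifestly null-homotopic $*$-homomorphisms $\varphi_n\colon D\to A$ that converges pointwise to $\varphi$, then invoke the homotopy-stability consequence of semiprojectivity of $D$ to conclude that $\varphi$ itself is null-homotopic.

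By weak projectivity of $A$ in the form of the preceding theorem, fix a projective $C^*$-algebra $P$, a surjection $\rho\colon P\to A$, and a sequence of $*$-homomorphisms $\theta_n\colon A\to P$ with $\rho\theta_n(a)\to a$ for every $a\in A$. Projective $C^*$-algebras are contractible: projectivity of $P$ lifts $\mathrm{id}_P$ through the cone surjection $\delta_1\colon CP\to P$ (with $CP=C_0((0,1],P)$) to a $*$-homomorphism $\sigma\colon P\to CP$ satisfying $\delta_1\sigma=\mathrm{id}_P$, and then $(p,t)\mapsto\sigma(p)(t)$ is a $*$-homomorphism homotopy from the zero map to $\mathrm{id}_P$. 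Consequently each $\theta_n\varphi\colon D\to P$ is null-homotopic via $\sigma\theta_n\varphi\colon D\to CP$, and applying the induced cone map $C\rho\colon CP\to CA$ defined by $(C\rho(f))(t):=\rho(f(t))$ produces an explicit null-homotopy of $\varphi_n:=\rho\theta_n\varphi\colon D\to A$. By construction $\varphi_n(d)\to\varphi(d)$ in norm for every $d\in D$.

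To finish I invoke the standard homotopy-stability consequence of semiprojectivity of $D$ (see \cite{Loring-lifting-perturbing}): if a sequence of $*$-homomorphisms $\psi_n\colon D\to A$ converges pointwise in norm to a $*$-homomorphism $\psi\colon D\to A$, then $\psi_n\sim_h\psi$ for all sufficiently large $n$. Applied to $\varphi_n\to\varphi$, this yields $\varphi\sim_h\varphi_n$ for some large $n$, and chaining with the null-homotopy of $\varphi_n$ gives $\varphi\sim_h 0$. The main obstacle is this last invocation: one must cite or prove the precise homotopy-stability statement in the form needed. Its standard proof lifts the combined $*$-homomorphism $(\psi_n,\psi)\colon D\to A\oplus A$ through the endpoint surjection $\mathbf{I}A\to A\oplus A$ by applying the defining inductive-ideal lifting property of semiprojectivity to the decomposition $SA=\overline{\bigcup_k K_k}$ with $K_k=\{f\in\mathbf{I}A:f|_{[0,1/k]\cup[1-1/k,1]}=0\}$, followed by a gluing argument that uses the approximate agreement of $\psi_n$ and $\psi$ near the endpoints.
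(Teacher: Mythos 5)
Your proof is correct and follows essentially the same strategy as the paper: approximate $\varphi$ by manifestly null-homotopic $*$-homomorphisms $\varphi_n\rightarrow\varphi$ and then invoke the homotopy stability of maps out of a semiprojective algebra (Blackadar's Theorem 3.6, which the paper cites rather than reproves). The only cosmetic difference is that you produce the approximants by factoring through a contractible projective $P$ via the approximate-retract characterization, whereas the paper applies weak projectivity directly to the cone evaluation $\delta_{1}:\mathbf{C}A\rightarrow A$ to get $\psi_{n}:A\rightarrow\mathbf{C}A$ and sets $\varphi_{n}=\delta_{1}\circ\psi_{n}\circ\varphi$.
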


\begin{proof}
Suppose $\varphi:D\rightarrow A$ is given. 
Let $\delta_{1}:\mathbf{C}A\rightarrow A$
be the map defined on the cone over $A$ by evaluation at $1.$ The
weak projectivity of $A$ provides us with $*$-homomorphisms 
$\psi_{n}:A\rightarrow\mathbf{C}A$
with $\delta_{1}\circ\psi_{n}\rightarrow\mbox{id}_{A}.$ 
Let $\varphi_{n}=\delta_{1}\circ\psi_{n}\circ\varphi$
so that $\varphi_{n}\sim0$ and $\varphi_{n}\rightarrow\varphi.$
By \cite[Theorem 3.6]{Blackadar-shape-theory} 
there is some $n$
for which $\varphi_{n}\sim\varphi.$ 
\end{proof}

\begin{thm}
If $A$ is weakly projective then $K_{*}(A)=0.$
\end{thm}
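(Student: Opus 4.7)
The plan is to apply the weak projectivity of $A$ to the cone-evaluation surjection $\delta_{1}\colon\mathbf{C}A\to A$ already used in Lemma~\ref{lem:invariantsVanish}. Since $\delta_{1}$ is surjective and $\mathrm{id}_{A}$ is a $*$-homomorphism $A\to A$, weak projectivity furnishes a sequence $\psi_{n}\colon A\to\mathbf{C}A$ with $\delta_{1}\circ\psi_{n}(a)\to a$ for every $a\in A$. Because $\mathbf{C}A$ is contractible, $K_{*}(\mathbf{C}A)=0$, so each composition $K_{*}(\delta_{1}\circ\psi_{n})=K_{*}(\delta_{1})\circ K_{*}(\psi_{n})$ is the zero map on $K_{*}(A)$.

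I would then close the loop by invoking the standard continuity of $K$-theory under pointwise convergence of $*$-homomorphisms. Any fixed class $x\in K_{*}(A)$ is represented by finitely many projections (for $K_{0}$) or unitaries (for $K_{1}$) inside some $M_{k}(\widetilde{A})$. The unital extensions of $\delta_{1}\circ\psi_{n}$ converge pointwise to $\mathrm{id}_{\widetilde{A}}$, so for $n$ large enough the images of those finitely many chosen representatives lie within norm strictly less than $1$ of the images under the identity. Close projections (respectively unitaries) define the same $K$-class, and therefore $K_{*}(\delta_{1}\circ\psi_{n})(x)=x$ for all sufficiently large $n$.

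Combining the two displays, for each $x\in K_{*}(A)$ I get $x=K_{*}(\delta_{1}\circ\psi_{n})(x)=0$ once $n$ is taken large enough. Hence $K_{*}(A)=0$.

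The only potentially delicate ingredient is the perturbation statement that justifies the second paragraph, but this is a standard fact coming from the equivalence of projections (resp.\ unitaries) at norm distance less than $1$. A second route would be to apply Lemma~\ref{lem:invariantsVanish} to a semiprojective $D$ whose $[D,A]$ computes $K_{*}(A)$; however, $K$-theory is naturally expressed in terms of maps into matrix stabilizations, and matrix stability of weak projectivity is not yet on the record, so the direct cone argument above is cleaner.
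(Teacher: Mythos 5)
Your proof is correct. It is worth noting that the paper states this theorem with no proof at all; the intended route is evidently via Lemma~\ref{lem:invariantsVanish} (compare the remark after Question~\ref{pro:contractibleWPimpliesP?} that the lemma makes ``all the obvious invariants vanish''), i.e., one realizes $K_{*}$ through homotopy classes of maps out of suitable semiprojective algebras such as $q\mathbb{C}$. That route has a genuine technical cost: the standard identifications of $K$-theory with $[D,\cdot]$ are stated for matrix stabilizations, so one must first transport weak projectivity to $\mathbf{M}_{n}(A)$ or to the stabilization (Theorem~\ref{thm:closedUnderMatrices} handles matrices, but the bookkeeping is nontrivial), and one invokes Blackadar's perturbation theorem for semiprojective domains. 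Your argument reuses only the first half of the proof of Lemma~\ref{lem:invariantsVanish} --- the approximate factorization of $\mathrm{id}_{A}$ through the contractible cone $\mathbf{C}A$, which kills $K_{*}(\delta_{1}\circ\psi_{n})$ by functoriality --- and replaces the semiprojectivity machinery by the elementary stability of $K$-classes: two projections (resp.\ unitaries) in $\mathbf{M}_{k}(\widetilde{A})$ at distance less than $1$ define the same class, and pointwise convergence of the unital extensions of $\delta_{1}\circ\psi_{n}$ to $\mathrm{id}_{\widetilde{A}}$ gives entrywise, hence norm, convergence on each fixed matrix representative. The only points to state carefully are exactly the ones you flag: pass to unital extensions on $\widetilde{A}$, view $K_{*}(A)$ as the appropriate kernel inside $K_{*}(\widetilde{A})$, and let $n$ depend on the fixed class $x$. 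In exchange for being less general than the lemma (which kills $[D,A]$ for every semiprojective $D$), your version is self-contained and avoids any appeal to closure of weak projectivity under matrix amplification.
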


\section{Closure Properties}

The closure properties for projectivity 
found in \cite{LoringProjectiveCstar}
hold, and with practically the same proofs, for weak projectivity.
The proofs involve hereditary subalgebras generated by positive elements,
which are almost never unital, so we do not know about these closure
properties for weak projectivity w.r.t.\ unital $C^{*}$-algebras.

\begin{thm}
\label{thm:closedUnderMatrices}
If $A$ is separable and weakly projective
then $\mathbf{M}_{n}(A)$ is weakly projective.
\end{thm}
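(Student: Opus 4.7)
The plan is to reduce to the analogous closure property for projectivity via the characterization already proved earlier in this section: $A$ is weakly projective if and only if there exist a projective $C^{*}$-algebra $P$, a surjection $\rho:P\rightarrow A$, and $*$-homomorphisms $\theta_{m}:A\rightarrow P$ with $\rho\circ\theta_{m}(a)\rightarrow a$ for every $a$ in $A$. Since $A$ is weakly projective, I would begin by fixing such a triple $(P,\rho,\theta_{m})$.

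Next I would apply the functor $\mathbf{M}_{n}(-)$ entrywise. This produces a surjection $\mathbf{M}_{n}(\rho):\mathbf{M}_{n}(P)\rightarrow\mathbf{M}_{n}(A)$ and $*$-homomorphisms $\mathbf{M}_{n}(\theta_{m}):\mathbf{M}_{n}(A)\rightarrow\mathbf{M}_{n}(P)$. The crucial ingredient here is that $\mathbf{M}_{n}(P)$ is again projective, which is one of the closure properties established in \cite{LoringProjectiveCstar} and is exactly the kind of parallel statement the author alludes to in the paragraph immediately preceding the theorem.

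Finally I would check that the approximate lifting property survives the matrix functor. For $x=(x_{ij})\in\mathbf{M}_{n}(A)$, the $(i,j)$-entry of $\mathbf{M}_{n}(\rho)\circ\mathbf{M}_{n}(\theta_{m})(x)-x$ is $\rho\circ\theta_{m}(x_{ij})-x_{ij}$, so its norm is controlled by
\[
\bigl\Vert\mathbf{M}_{n}(\rho)\circ\mathbf{M}_{n}(\theta_{m})(x)-x\bigr\Vert\leq\sum_{i,j=1}^{n}\bigl\Vert\rho\circ\theta_{m}(x_{ij})-x_{ij}\bigr\Vert,
\]
which tends to $0$ by the defining property of the $\theta_{m}$. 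This is exactly the witness required by condition (b) of the characterization for $\mathbf{M}_{n}(A)$, so $\mathbf{M}_{n}(A)$ is weakly projective.

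I do not expect any genuine obstacle: once the characterization via a projective cover with approximate lifts is in hand, the argument is a formal transport together with the matrix closure for projectivity from \cite{LoringProjectiveCstar}. An alternative, more hands-on route would mirror the projective case directly by lifting the matrix units attached to an arbitrary surjection onto $\mathbf{M}_{n}(A)$ and then applying weak projectivity of $A$ inside the $(1,1)$-corner; this would reproduce the proof of \cite{LoringProjectiveCstar} almost verbatim, which fits the author's remark that the closure properties go through "with practically the same proofs."
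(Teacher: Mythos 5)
Your argument is correct, but it takes a genuinely different route from the paper. The paper's proof consists of a single pointer to \cite[Theorem 10.2.3]{Loring-lifting-perturbing}: the intention is to rerun the direct argument for projectivity of $\mathbf{M}_{n}(A)$ (lifting matrix units relative to an arbitrary surjection onto $\mathbf{M}_{n}(A)$ and feeding the $(1,1)$-corner to the hypothesis on $A$), with exact lifts replaced by approximate ones at each stage --- this is precisely the ``more hands-on route'' you sketch in your last paragraph. What you actually propose is instead a formal reduction: take a projective cover $\rho:P\rightarrow A$ with approximate right inverses $\theta_{m}$, as supplied by condition (b) of the earlier characterization theorem, apply $\mathbf{M}_{n}(-)$ entrywise, and invoke the closure of projectivity under matrix amplification (from \cite{LoringProjectiveCstar}, equivalently \cite[Theorem 10.2.3]{Loring-lifting-perturbing}) to see that $\mathbf{M}_{n}(P)$ is again projective; the entrywise estimate
\[
\bigl\Vert\mathbf{M}_{n}(\rho)\circ\mathbf{M}_{n}(\theta_{m})(x)-x\bigr\Vert\leq\sum_{i,j=1}^{n}\bigl\Vert\rho\circ\theta_{m}(x_{ij})-x_{ij}\bigr\Vert
\]
then verifies condition (b) for $\mathbf{M}_{n}(A)$, which is separable, so the characterization applies. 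This is clean and complete: all of the hard lifting work is quarantined inside the cited exact-lifting result, and the ``weak'' part of the argument becomes purely formal. The trade-off is that your proof leans on the characterization theorem and on the projectivity closure result as black boxes, whereas the paper's intended adaptation reproves the matrix step from scratch in the approximate setting; on the other hand, your reduction makes transparent why the closure property should hold ``with practically the same proof,'' and it would transport verbatim to any other closure property of projectivity that is compatible with a functor preserving surjections and pointwise convergence.
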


\begin{proof}
The proof is very similar to that 
of \cite[Theorem 10.2.3]{Loring-lifting-perturbing}.
\end{proof}

\begin{thm}
\label{thm:closedUnderSums} 
Suppose $A_{n}$ is separable for all
$n$ (finite or countable list). Then $\bigoplus_{n}A_{n}$ is weakly
projective if and only if each $A_{n}$ is weakly projective. 
\end{thm}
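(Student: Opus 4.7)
The plan is to handle the two directions separately; within the forward direction, the countable case will reduce to the finite case via truncation, and the finite case to the fact that finite direct sums of projective $C^*$-algebras are projective.

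The reverse direction is immediate: the canonical projection $\pi_{n} : \bigoplus_{k} A_{k} \to A_{n}$ is split exactly by the inclusion $\iota_{n} : A_{n} \hookrightarrow \bigoplus_{k} A_{k}$, so $A_{n}$ is an exact, hence approximate, retract of $\bigoplus_{k} A_{k}$. Proposition~\ref{pro:approxRetractOfWPisWP} then gives the conclusion.

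For the forward direction I first handle a finite sum $A_{1} \oplus \cdots \oplus A_{N}$ with each $A_{n}$ weakly projective. Using the preceding characterization of weak projectivity as being an approximate retract of a projective, I pick for each $n$ a projective $C^{*}$-algebra $P_{n}$, a surjection $\rho_{n} : P_{n} \to A_{n}$, and a sequence $\theta_{m}^{n} : A_{n} \to P_{n}$ with $\rho_{n} \circ \theta_{m}^{n} \to \mathrm{id}_{A_{n}}$. The finite direct sum $P = P_{1} \oplus \cdots \oplus P_{N}$ is itself projective by the standard closure of projectivity under finite direct sums (cf.\ \cite[Theorem 10.2.2]{Loring-lifting-perturbing}), the surjection $\rho = \rho_{1} \oplus \cdots \oplus \rho_{N} : P \to A_{1} \oplus \cdots \oplus A_{N}$ is the obvious one, and the coordinatewise assembly $\theta_{m}(a_{1}, \ldots, a_{N}) = (\theta_{m}^{1}(a_{1}), \ldots, \theta_{m}^{N}(a_{N}))$ is automatically a $*$-homomorphism into $P$ that approximately splits $\rho$. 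Crucially, since both source and target are direct sums, no orthogonality among the component images has to be arranged by hand. Thus $A_{1} \oplus \cdots \oplus A_{N}$ is an approximate retract of the projective $P$, and hence weakly projective.

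For the countable case I apply Theorem~\ref{thm:WPfromPapproximations} to the truncation $*$-homomorphisms $\theta_{N} : \bigoplus_{n} A_{n} \to \bigoplus_{n} A_{n}$ defined by $(a_{1}, a_{2}, \ldots) \mapsto (a_{1}, \ldots, a_{N}, 0, 0, \ldots)$. These converge pointwise to the identity, since by definition of the $C^{*}$-direct sum every element $a = (a_{n})$ satisfies $\|a_{n}\| \to 0$, and each image $\theta_{N}\bigl(\bigoplus_{n} A_{n}\bigr)$ is isomorphic to the finite direct sum $A_{1} \oplus \cdots \oplus A_{N}$, which is weakly projective by the finite case. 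The theorem then delivers weak projectivity of $\bigoplus_{n} A_{n}$. The main obstacle lies in the finite case, where the orthogonality among the components of a $*$-homomorphism out of a direct sum could in principle fail to survive approximate lifting; routing through the projective characterization packages this difficulty into the classical orthogonal-lifting argument used to show that direct sums of projectives are projective, so no separate perturbation by strictly positive elements is required here.
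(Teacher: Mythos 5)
Your proof is correct, but it reaches the conclusion by a genuinely different route than the paper. You agree on the easy direction (each summand is an honest retract of the sum, so Proposition~\ref{pro:approxRetractOfWPisWP} applies). For the converse, the paper attacks an arbitrary lifting problem $\varphi:\bigoplus A_{n}\rightarrow C$, $\rho:B\rightarrow C$ head-on: it invokes the orthogonality-lifting machinery of \cite{Loring-lifting-perturbing} to replace $B\rightarrow C$ by a diagonal problem $\bigoplus B_{n}\rightarrow\bigoplus C_{n}$, then assembles coordinatewise approximate lifts and checks the $\sup$-norm estimate on a finite set; this handles the finite and countable cases in one uniform step. You instead compose two results already established in the paper: the characterization of weak projectivity as being an approximate retract of a projective $C^{*}$-algebra, which reduces the finite case to the closure of projectivity under finite direct sums (where the orthogonality issue is genuinely absorbed, exactly as you say --- your coordinatewise $\theta_{m}$ lands in orthogonal summands of $\bigoplus P_{n}$ for free), and Theorem~\ref{thm:WPfromPapproximations} applied to the truncation maps, which correctly converge point-norm to the identity on the $c_{0}$-direct sum. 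Your route is more modular and makes the logical dependence on the classical orthogonal-lifting lemma explicit rather than implicit; the paper's route is shorter once that lemma is granted and does not need the finite/countable split. The only blemish is the citation: the relevant statement in \cite{Loring-lifting-perturbing} on direct sums of projectives is the one the paper itself cites as Theorem~10.1.13, so you should point there rather than to 10.2.2; the mathematical content you invoke is standard and correct.
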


\begin{proof}
If the sum is WP, we use Proposition~\ref{pro:approxRetractOfWPisWP}
and the fact that summand is a retract of a direct sum to conclude
that each summand is WP.

For the converse, we have as 
in \cite[Theorem 10.1.13]{Loring-lifting-perturbing}
a way to lift orthogonal elements in the direct sum, each completely
positive in $A_{n},$ and so can reduce to a lifting problem of the
form
\[
\xymatrix{
  & {\bigoplus B_n }	\ar[d] ^{	\bigoplus \rho_n }\\
 {\bigoplus A_n }	\ar[r] _{	\bigoplus \varphi _n }&  {\bigoplus C_n }	 
}
\]
Suppose $F$ is a finite subset of $\bigoplus A_{n}$
with $F=\{a_{1},\ldots,a_{k}\}$ and 
$a_{j}=\left\langle a_{j,n}\right\rangle .$
There are $\psi_{n}:A_{n}\rightarrow C_{n}$ with 
\[
\left\Vert \rho_{n}\circ\psi_{n}(a_{j,n})-\varphi_{n}(a_{j,n})\right\Vert 
\leq\epsilon
\]
 for each $j.$ Then
 \[
\left\Vert 
\left(\bigoplus\rho_{n}\right)\circ\left(\bigoplus\psi_{n}\right)(a_{j})
-\left(\bigoplus\varphi_{n}\right)(a)
\right\Vert 
=\sup
\left\Vert
\rho_{n}\circ\psi_{n}(a_{j,n})-\varphi_{n}(a_{j,n})
\right\Vert 
\]
is also less than or equal to $\epsilon.$
\end{proof}

\section{Questions}

The Hilbert cube has nice properties, like local connectedness and
the fixed-point property, and these get inherited by all ARs and,
to a lesser extent, by all AARs. It would be nice to find similar
properties of a ``free'' $C^{*}$-algebra (generated by a universal
sequence of contractions).

\begin{question}

\label{pro:contractibleWPimpliesP?} 
Does contractability plus weak
projectivity imply projectivity? 

\end{question}

This question is motivated by the commutative situation. See \cite[Theorem 7.2]{ClappAANR}.
An answer may be hard to find, as Lemma~\ref{lem:invariantsVanish}
shows that all the obvious invariants vanish on the weak projectives.

\begin{question}

Is the class of $C^{*}$-algebras that are weakly projectivity w.r.t.\ unital
$C^{*}$-algebras closed under direct sums?

\end{question}

\begin{question}

Is the class of $C^{*}$-algebras that are weakly
projectivity w.r.t.\ unital $C^{*}$-algebras closed under
the formation of matrix algebras?

\end{question}

\begin{question}

For separable $C^{*}$-algebras, is it true that 
\begin{gather*}
\mathbf{M}_{2}(A)
\mbox{ is weakly projective w.r.t.\ unital }C^{*}\mbox{-algebras}\\
\qquad
\implies 
A
\mbox{ is weakly projective w.r.t.\ unital }C^{*}\mbox{-algebras?}
\end{gather*}

\end{question}

\begin{question}

For separable $C^{*}$-algebras, is it true that 
\[
\mathbf{M}_{2}(A)\mbox{ is weakly projective }
\implies 
A\mbox{ is weakly projective }?
\]

\end{question}

See \cite[Section 4]{BlackadarSPinSimple} 
and \cite[Section 3]{HadwinLiApproxLift}.

\end{document}